\theoremstyle{plain}
\newtheorem{thm}{Theorem}[section]
\newtheorem{cor}[thm]{Corollary}
\newtheorem*{NonCon}{Nonexistence Conjecture}
\newtheorem{thmA}{Theorem}
\newtheorem{corA}[thmA]{Corollary}
\theoremstyle{definition}
\newtheorem{rem}[thm]{Remark}
\newcommand{\N}{\mathbb{N}}
\newcommand{\C}{\mathbb{C}}
\newcommand{\R}{\mathbb{R}}
\newcommand{\Q}{\mathbb{Q}}
\DeclareMathOperator{\Div}{Div}
\DeclareMathOperator{\NEb}{\overline{\mathrm{NE}}}
\DeclareMathOperator{\Supp}{Supp}
\DeclareMathOperator{\mult}{mult}
\begin{document}
	\title[Abundance for non-RC uniruled pairs]{Abundance for uniruled pairs\\ which are not rationally connected}
	
	\author{Vladimir Lazi\'c}
	\address{Fachrichtung Mathematik, Campus, Geb\"aude E2.4, Universit\"at des Saarlandes, 66123 Saarbr\"ucken, Germany}
	\email{lazic@math.uni-sb.de}
	
	\thanks{
		I was supported by the DFG-Emmy-Noether-Nachwuchsgruppe ``Gute Strukturen in der h\"oherdimensionalen birationalen Geometrie". I thank F.\ Meng, N.\ Tsakanikas and the referee for useful comments and suggestions.
		\newline
		\indent 2020 \emph{Mathematics Subject Classification}: 14E30.\newline
		\indent \emph{Keywords}: Abundance conjecture, Minimal Model Program, good models.
	}
	
\begin{abstract}
One of the central aims of the Minimal Model Program is to show that a projective log canonical pair $(X,\Delta)$ with $K_X+\Delta$ pseudoeffective has a good model, i.e.\ a minimal model $(Y,\Delta_Y)$ such that $K_Y+\Delta_Y$ is semiample. The goal of this paper is to show that this holds if $X$ is uniruled but not rationally connected, assuming the Minimal Model Program in dimension $\dim X-1$. Moreover, if $X$ is rationally connected, then we show that the existence of a good minimal model for $(X,\Delta)$ follows from a nonexistence conjecture for a very specific class of rationally connected pairs of Calabi--Yau type.
\end{abstract}
	
	\maketitle
	\setcounter{tocdepth}{1}
	\tableofcontents
	
\section{Introduction}

One of the central aims of the Minimal Model Program (MMP) over the complex numbers is to show that a projective log canonical pair $(X,\Delta)$ with $K_X+\Delta$ pseudoeffective has a minimal model $(Y,\Delta_Y)$ on which the Abundance conjecture holds, i.e.\ such that some multiple of $K_Y+\Delta_Y$ is basepoint free. Such a minimal model is called a \emph{good model}. The Abundance conjecture and the existence of good models for log canonical pairs are among the most important open problems in higher dimensional birational geometry: indeed, they -- together with other results of the MMP -- imply that every projective variety with mild singularities can be built (up to birational equivalence) out of varieties whose canonical class is either ample, numerically trivial or anti-ample: in other words, whose curvature has the constant sign. This programme was completed in dimensions up to $3$ by work of many people; the proof of the Abundance conjecture was finalised in \cite{KMM94}. In higher dimensions the programme was settled for varieties of log general type in \cite{BCHM} and in \cite{CL12a,CL13} by different methods; in general, there has been a lot of progress, but the Abundance conjecture remains open.

The goal of this paper is to essentially solve the Abundance conjecture and the existence of good models for the class of pairs whose underlying varieties are uniruled but not rationally connected. Here, the word \emph{essentially} means that the problems are solved modulo the MMP in lower dimensions.

It is known at least since \cite{KMM94} that the behaviour of projective log canonical pairs $(X,\Delta)$ depends to a large extent on whether the underlying variety $X$ is covered by rational curves or not. In some sense, pairs whose underlying variety $X$ is uniruled are easier to work with due to the fact that if $X$ is additionally smooth, then $K_X$ is not pseudoeffective. For such pairs, several of the most important problems in the MMP were recently essentially solved: the Nonvanishing conjecture in \cite{DHP13,LM21} and the existence of minimal models in \cite{LT22}.

With notation as above, if $K_X+\Delta$ is pseudoeffective, $(X,\Delta)$ is klt and log smooth and $\Delta$ is a $\Q$-divisor, it has been known since \cite{DL15} that there exists some $0<\tau\leq1$ such that the pair $(X,\tau\Delta)$ has a good model, assuming the MMP in lower dimensions. However, it is not clear how to deduce the existence of a good model of $(X,\Delta)$ from the existence of a good model of $(X,\tau\Delta)$.

The following is the main result of the paper.

\begin{thmA}\label{thm:MRC}
Assume the existence of good models for non-uniruled klt pairs with boundaries with rational coefficients in dimension $n-1$.

Let $(X,\Delta)$ be a projective log canonical pair of dimension $n$ such that $X$ is uniruled but not rationally connected. If $K_X+\Delta$ is pseudoeffective, then $(X,\Delta)$ has a good model. In particular, if $K_X+\Delta$ is nef, then it is semiample.
\end{thmA}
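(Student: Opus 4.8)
I would argue by induction on $n$, reducing every case either to the assumed existence of good models for non-uniruled klt pairs in dimension $n-1$ — and, by replacing a pair $(Y,\Gamma)$ in dimension $m<n-1$ by $(Y\times B,\Gamma\times B)$ for a suitable abelian variety $B$, in all dimensions $\leq n-1$ — or to a strictly lower-dimensional instance of the statement itself. First come the standard reductions: passing to a $\Q$-factorial dlt model and perturbing coefficients we may assume $(X,\Delta)$ is $\Q$-factorial dlt with $\Delta$ rational; since $X$ is uniruled and $K_X+\Delta$ is pseudoeffective, the pair has a minimal model by \cite{LT19}, so, replacing $(X,\Delta)$ by it and using that the existence of a good model is a crepant-birational invariant, we may assume that $K_X+\Delta$ is nef and that the goal is its semiampleness (the final assertion of the theorem being the case where this minimal model is $(X,\Delta)$ itself). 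By the Nonvanishing theorem for uniruled pairs (\cite{DHP13,LM19}) we have $\kappa(X,K_X+\Delta)\geq 0$. The base cases $n\leq 2$ are classical.

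The main new input is the maximal rationally connected fibration. After a further birational modification there is a morphism $\pi\colon X\to Z$ realising it with $Z$ smooth; by Graber--Harris--Starr $Z$ is not uniruled, hence $K_Z$ is pseudoeffective, and since $X$ is uniruled but not rationally connected we have $1\leq\dim Z\leq n-1$, with general fibre $F$ rationally connected of dimension $\geq 1$. I would then split according to the relative numerical dimension $\nu=\nu\bigl(F,(K_X+\Delta)|_F\bigr)$, which is well defined because $(K_X+\Delta)|_F$ is nef. If $\nu=0$, then $(K_X+\Delta)|_F$ is numerically trivial, hence $\sim_\Q 0$ by the abundance theorem for numerically trivial log canonical divisors, so the canonical bundle formula gives $K_X+\Delta\sim_\R\pi^*(K_Z+B_Z+M_Z)$ for a generalised lc pair on the non-uniruled variety $Z$ of dimension $\leq n-1$; reducing the generalised lc situation to the klt one by standard techniques, the assumption provides a good model for $(Z,B_Z+M_Z)$, hence for $(X,\Delta)$. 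If $\nu=\dim F$, then $(K_X+\Delta)|_F$ is big and $(F,\Delta|_F)$ is of log general type, and the theory of fibrations with fibres of log general type (relative canonical models, weak positivity of pushforwards) together with the assumption on $Z$ and the known implication ``nef and abundant $\Rightarrow$ semiample'' for lc pairs yields a good model. If $0<\nu<\dim F$, a relative Iitaka-type reduction over $Z$ — which is legitimate because the fibres $F$ are uniruled, so Nonvanishing applies to them, and because numerical dimension zero forces abundance — produces a fibration $X\dashto X'$ over $Z$ with $\dim X'=\dim Z+\nu<n$ and a generalised lc pair on $X'$ whose log canonical class descends $K_X+\Delta$; here $X'$ is again uniruled (its fibres over $Z$ are positive-dimensional and rationally connected) but not rationally connected (it dominates the non-uniruled $Z$), so the inductive hypothesis in dimension $\dim X'<n$ finishes this case. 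The passage from $(X,\tau\Delta)$ to $(X,\Delta)$ flagged in the introduction is then a formal consequence: if $\tau_0\in(0,1]$ is maximal with $(X,\tau\Delta)$ having a good model for all $\tau<\tau_0$, one passes to the canonical model of $(X_0,\tau_0\Delta_0)$ and applies the same MRC dichotomy to its fibres — which are again uniruled but not rationally connected, of dimension $<n$ whenever the canonical model has positive dimension — to force $\tau_0=1$.

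I expect the main obstacle to be the case $0<\nu<\dim F$, that is, controlling $K_X+\Delta$ along the general fibres of the MRC fibration when it is neither relatively trivial nor relatively big. One must carry out the relative reduction over $Z$ so as to descend \emph{$\R$-linear} — not merely numerical — triviality after a birational modification, keep track of the moduli part of the resulting canonical bundle formula, and verify that the intermediate variety $X'$ is genuinely not rationally connected, so that the induction never silently appeals to the rationally connected case. That is exactly the case which cannot be settled here: when $Z$ is a point, i.e.\ $X$ itself is rationally connected, there is no fibration to descend along, and the problem collapses to giving a good model to a very special rationally connected pair of Calabi--Yau type, which is the content of the companion nonexistence conjecture.
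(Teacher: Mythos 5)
Your overall strategy --- pass to the MRC fibration, observe that its base $Z$ is not uniruled by \cite{GHS03} and \cite{BDPP}, and split according to the positivity of $(K_X+\Delta)|_F$ on a very general fibre $F$, descending via the canonical bundle formula in the non-big cases --- is exactly the paper's. One remark dissolves several of your worries at once: by \cite[Theorem 1.3 and Lemmas 2.3 and 2.4]{LM19} the hypothesis already implies the existence of good models for \emph{all} log canonical pairs in dimension $\leq n-1$, so the abelian-variety product trick is unnecessary, and there is no need to verify that the intermediate variety $X'$ in your case $0<\nu<\dim F$ is not rationally connected: the descent lands on a klt pair on a base of dimension $<n$, to which the (upgraded) hypothesis applies directly.

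Two steps, however, have genuine gaps. First, the case $\nu\bigl(F,(K_X+\Delta)|_F\bigr)=\dim F$ is not closed by what you write: weak positivity of pushforwards does not by itself show that $K_X+\Delta$ is abundant, so ``nef and abundant $\Rightarrow$ semiample'' has nothing to bite on. The paper's structure is different: nonvanishing for uniruled pairs gives $\kappa(X,K_X+\Delta)\geq0$; if $\kappa\geq1$ one concludes by Theorem \ref{thm:Lai} (good models for lc pairs with $\kappa\geq1$, given the MMP in lower dimensions); and if $\kappa=0$, the subadditivity of the Kodaira dimension \cite[Theorem 9.9]{KP17}, applied with $M=K_Y$ and using $\kappa(Y,K_Y)\geq0$, forces $\kappa(X,K_X+\Delta)\geq\kappa(F,K_F+\Delta|_F)=\dim F\geq1$, a contradiction --- so the big-fibre case simply cannot occur. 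You need this input (or an equivalent) to finish that case. Second, ``perturbing coefficients we may assume $\Delta$ is rational'' is not legitimate at the outset (the perturbation may destroy pseudoeffectivity, and a good model for the perturbed pair does not obviously yield one for $(X,\Delta)$), and your closing $\tau_0$-argument does not repair it: the fibres of the Iitaka fibration of $(X,\tau_0\Delta)$ have no reason to be uniruled but not rationally connected, so the MRC dichotomy cannot be reapplied to them. The paper instead first produces a minimal model $(Z,\Delta_Z)$ of the real-coefficient klt pair by \cite[Theorem C]{LT19}, decomposes $K_Z+\Delta_Z=\sum r_i(K_Z+\Delta_i)$ into finitely many nef rational classes via \cite[Proposition 3.2(3)]{Bir11} and applies the rational case to each $(Z,\Delta_i)$; the case $\lfloor\Delta\rfloor\neq0$ is then treated separately, either by Theorem \ref{thm:tau}(a) or by comparing the supports of $D+\varepsilon\lfloor\Delta\rfloor$ for varying $\varepsilon$, which is how $\kappa_\iota=\nu$ is transported from the klt perturbation back to $(X,\Delta)$.
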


Recall that the main result of \cite{DL15} was to reduce the problem of the existence of good models for uniruled klt pairs to that of non-uniruled klt pairs, under the same assumption in lower dimensions as in Theorem \ref{thm:MRC}. Thus, the previous result improves dramatically on \cite[Theorems 1.1 and 1.2]{DL15} when the underlying variety is not rationally connected, even in the klt category.

Since all the conjectures of the Minimal Model Program hold in dimension $3$, an immediate corollary is:

\begin{corA}\label{cor:dim4}
Let $(X,\Delta)$ be a projective log canonical pair of dimension $4$ such that $X$ is uniruled but not rationally connected. If $K_X+\Delta$ is pseudoeffective, then $(X,\Delta)$ has a good model. In particular, if $K_X+\Delta$ is nef, then it is semiample.
\end{corA}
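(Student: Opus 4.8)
The plan is to deduce this statement directly from Theorem \ref{thm:MRC} applied with $n=4$. The only thing to do is to check that the standing hypothesis of Theorem \ref{thm:MRC} in this range — namely the existence of good models for non-uniruled klt pairs with rational boundaries in dimension $n-1=3$ — is known unconditionally, and then read off the conclusion.

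For the verification I would recall that all the conjectures of the Minimal Model Program are theorems in dimension $3$. Concretely, if $(Y,\Gamma)$ is a projective klt pair of dimension $3$ with $\Gamma$ a $\Q$-divisor and with $K_Y+\Gamma$ pseudoeffective, then one first runs a $(K_Y+\Gamma)$-MMP to reach a minimal model $(Y',\Gamma')$, using the existence of minimal models for klt threefolds, and then applies the Abundance theorem in dimension $3$ to conclude that $K_{Y'}+\Gamma'$ is semiample; hence $(Y',\Gamma')$ is a good model of $(Y,\Gamma)$. I would cite the standard references for these facts (the works of Mori, Kawamata, Shokurov, Koll\'ar, and Keel--Matsuki--McKernan). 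Note that the non-uniruledness and rational-connectedness hypotheses play no role in this step: they intervene only inside the proof of Theorem \ref{thm:MRC}. With the assumption of Theorem \ref{thm:MRC} thus established for $n=4$, the conclusion is immediate: if $(X,\Delta)$ is a projective log canonical pair of dimension $4$ with $X$ uniruled but not rationally connected and $K_X+\Delta$ pseudoeffective, then Theorem \ref{thm:MRC} provides a good model, and if moreover $K_X+\Delta$ is nef then it is semiample.

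There is no genuine obstacle here; the argument is a one-line invocation of Theorem \ref{thm:MRC}. The only point deserving a little care is to make sure the dimension-$3$ results quoted are stated for klt pairs with arbitrary (in particular rational) boundary and in a form that yields a good model in the sense used throughout the paper, rather than merely for the terminal or smooth case — which is a matter of pointing to the correct references rather than of proving anything new.
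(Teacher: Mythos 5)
Your proposal is correct and is exactly the paper's argument: the corollary is stated as an immediate consequence of Theorem \ref{thm:MRC} with $n=4$, the only input being that good models for klt threefold pairs with rational boundary exist unconditionally by the dimension-$3$ MMP and Abundance. Your extra care about quoting the dimension-$3$ results for klt pairs with boundary (rather than only the terminal case) is appropriate but does not change the substance.
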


In the situation as in Theorem \ref{thm:MRC}, one usually tries to run an MMP in order to obtain a certain Mori fibre space; indeed, this was the strategy in \cite{DHP13,DL15,LT22}. In this paper, I use a different approach. Even though the idea of the proof of Theorem \ref{thm:MRC} is relatively simple and the actual proof is quite short, it uses machinery which has only been obtained very recently. 

The starting idea of this paper is to consider the MRC fibration $\pi\colon X\dashrightarrow Z$, when the divisor $\Delta$ has rational coefficients and the pair $(X,\Delta)$ is klt; the definition and properties of MRC fibrations are presented in Section \ref{section:prelim}. Since $X$ is not rationally connected, we have $\dim Z>0$. One easily reduces to the case when $\pi$ is a morphism, and we have that the Kodaira dimension $\kappa(X,K_X+\Delta)$ is nonnegative by the main result of \cite{LM21}. Then one analyses $\kappa(F,K_F+\Delta|_F)$, where $F$ is a very general fibre of $\pi$; we have $\kappa(F,K_F+\Delta|_F)\geq0$ by induction. If $\kappa(F,K_F+\Delta|_F)<\dim F$, then one runs a relative $(K_X+\Delta)$-MMP over $Z$ to obtain a situation where one can apply the canonical bundle formula from \cite{Amb05a} and use the induction on the dimension to conclude; this idea has already been exploited in \cite{LP18a,LP20a}. Otherwise, one uses the subadditivity of the Kodaira dimension from \cite{KP17} to show that $\kappa(X,K_X+\Delta)>0$, which then allows to conclude essentially by the main result of \cite{Lai11}. 

In general, when $\Delta$ has real coefficients and $(X,\Delta)$ is log canonical, one has to work somewhat more and additionally employ results from \cite{DL15,LT22,LM21,HH20} to conclude.

In fact, Theorem \ref{thm:MRC} is a special case of the following more general result.

\begin{thmA}\label{thm:main}
Assume the existence of good models for non-uniruled klt pairs with boundaries with rational coefficients in dimension $n-1$.

Let $(X,\Delta)$ be a projective log canonical pair of dimension $n$ and let $f\colon X\dashrightarrow Y$ be a dominant rational map to a normal projective variety $Y$ such that $0<\dim Y<\dim X$ and $Y$ is not uniruled. If $K_X+\Delta$ is pseudoeffective, then $(X,\Delta)$ has a good model.
\end{thmA}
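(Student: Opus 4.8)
The strategy is to reduce Theorem~B to the situation treated by the MRC-quotient argument behind Theorem~A, by replacing the arbitrary non-uniruled target $Y$ with the MRC quotient and exploiting that the fibres of the map to $Y$ already carry useful positivity. First I would like to resolve the map: take a common log resolution so that, after replacing $(X,\Delta)$ by a higher birational model (using that good models are a birational invariant for log canonical pairs with pseudoeffective log canonical divisor, and that pseudoeffectivity is preserved), we may assume $f\colon X\to Y$ is a morphism with $Y$ smooth. Since $Y$ is not uniruled and $0<\dim Y<\dim X$, the variety $X$ is \emph{not} rationally connected (a rational map to a positive-dimensional non-uniruled variety obstructs rational connectedness), so morally we are in the setting of Theorem~A; the point of Theorem~B is to run the same machine \emph{relatively over $Y$} rather than over the MRC quotient.

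**Key steps.** I would proceed as follows. (1) \emph{Reduce to $\Q$-boundaries and klt.} Using the perturbation results from \cite{DL15,LT19,HH19} cited in the introduction, together with the log canonical case handled via \cite{LM19}, reduce to the case where $(X,\Delta)$ is klt and $\Delta$ is a $\Q$-divisor; this is exactly the reduction the introduction promises ``one has to work somewhat more.'' (2) \emph{Analyse the generic fibre.} Let $F$ be a very general fibre of $f$ and consider $\kappa(F,(K_X+\Delta)|_F)=\kappa(F,K_F+\Delta|_F)$. If this is not maximal, i.e. $\kappa(F,K_F+\Delta|_F)<\dim F$, run a relative $(K_X+\Delta)$-MMP over $Y$; since $K_X+\Delta$ is pseudoeffective and the log canonical divisor is not big on the fibres, this MMP terminates (after possibly passing to a good model fibrewise, which exists by the inductive hypothesis once $\dim F<n$, combined with the canonical bundle formula / \cite{LP18a,LP18b} style argument) with a model on which $K_X+\Delta$ is semiample over $Y$, and then one descends using that $Y$ is not uniruled to upgrade relative semiampleness to absolute semiampleness. (3) \emph{The maximal case.} If instead $\kappa(F,K_F+\Delta|_F)=\dim F$, apply the subadditivity of Kodaira dimension of \cite{KP17} to $f\colon X\to Y$: since $\kappa(Y)\geq 0$ (as $Y$ is non-uniruled projective smooth, by \cite{BDPP13}) and the fibres have maximal Kodaira dimension, we get $\kappa(X,K_X+\Delta)\geq \kappa(F,K_F+\Delta|_F)+\kappa(Y)\geq \dim F+0>0$; in particular $\kappa(X,K_X+\Delta)\neq 0$. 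Now invoke \cite{Lai11}: a klt pair with $\kappa(X,K_X+\Delta)\geq 0$ and $X$ uniruled has a good model once the log canonical divisor is not numerically trivial in the ``wrong'' way — more precisely, Lai's theorem gives a good model whenever the pair has a minimal model and positive Kodaira dimension, and the existence of the minimal model here follows from \cite{LT19} since $X$ is uniruled. Combining, $(X,\Delta)$ has a good model.

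**Main obstacle.** The delicate point is Step~(2): making the relative MMP over $Y$ actually work and terminate requires that the inductive assumption (good models for non-uniruled klt pairs with rational boundaries in dimension $n-1$) be applicable to the \emph{general fibre} $(F,\Delta|_F)$ — but $F$ need not be non-uniruled, indeed $F$ could itself be rationally connected. The resolution is that one does not apply the hypothesis to $F$ directly but feeds the relative Iitaka fibration of $K_X+\Delta$ over $Y$ into the canonical bundle formula, producing a generalised pair on a variety of dimension $<n$ whose base is again non-uniruled (it fibres over $Y$), so the inductive hypothesis applies there; threading the boundary rationality through the canonical bundle formula and controlling the moduli and discriminant parts is the technical heart. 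A secondary subtlety is descending semiampleness from over $Y$ to over the point: this uses that $K_Y$ is pseudoeffective, hence (by base-point-freeness applied after the relative semiample fibration) the pushforward of the relative semiample system combines with a positive multiple of the target's canonical polarisation — the same mechanism as in \cite{LP18a,LP18b} — and care is needed when $K_X+\Delta$ restricted to fibres of the combined fibration is numerically trivial, which is precisely where the ``very specific class of rationally connected Calabi-Yau pairs'' in the abstract would reappear if $Y$ were allowed to be uniruled; here non-uniruledness of $Y$ is exactly what rules that obstruction out.
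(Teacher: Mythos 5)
Your strategy is essentially sound, but it takes a genuinely different (and longer) route from the paper's in the key step. For $\Delta$ a $\Q$-divisor the paper's proof is a two-line application of Hashizume's result \cite[Theorem 1.4(2)]{Has19} with $M=K_Y$: after making $f$ a fibration with $Y$ smooth, one has $\kappa(Y,K_Y)\geq 0$ by \cite{BDPP} and the lower-dimensional hypothesis, $\kappa(F,K_F+\Delta|_F)=\nu(F,K_F+\Delta|_F)$ on a very general fibre by the same hypothesis, and Hashizume's theorem then gives $\kappa(X,K_X+\Delta)=\nu(X,K_X+\Delta)$ directly, with no case division. You instead re-run the dichotomy the paper uses for Theorem A (MRC case): if $\kappa(F,K_F+\Delta|_F)=\dim F$, use \cite{KP17} subadditivity to get $\kappa(X,K_X+\Delta)\geq 1$ and conclude by Lai-type results; otherwise run a relative MMP over $Y$, take the relative Iitaka fibration, and descend via the canonical bundle formula to dimension $<n$. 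This is exactly the alternative the author flags in the introduction (``I could have written the proof \ldots by using \cite{Has19} instead of \cite{KP17}'', and vice versa), and it does work: the relative good model over $Y$ exists by \cite[Theorem 2.12]{HX13} because the fibre pair has a good model by induction, and Ambro's theorem produces an honest klt pair on the Iitaka base, so no generalised-pair machinery is needed. What the paper's route buys is brevity; what yours buys is uniformity with the Theorem A argument.

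Three smaller points. First, your worry that the inductive hypothesis might not apply to a uniruled or rationally connected fibre $F$ is a non-issue: the paper's standing first move (\cite[Theorem 1.3 and Lemmas 2.3 and 2.4]{LM19}) upgrades the stated hypothesis to the existence of good models for \emph{all} log canonical pairs in dimension at most $n-1$, so the fibre is covered and your proposed detour through generalised pairs is unnecessary. Second, your appeal to \cite{LT19} ``since $X$ is uniruled'' is misplaced -- in Theorem B the variety $X$ need not be uniruled; but this is harmless because the $\kappa\geq 1$ case is handled by the paper's Theorem \ref{thm:Lai} without any uniruledness assumption. Third, the reduction from real to rational boundaries is stated too loosely: the paper's Steps 3 and 4 first dispose of the case where $K_X+(1-\varepsilon)\Delta$ is never pseudoeffective via Theorem \ref{thm:tau}, then pass to a minimal model and use Birkar's decomposition of $K_{X'}+\Delta'$ into rational nef klt pieces, each of which still dominates $Y$ so that the $\Q$-divisor case applies to it; you should make sure your perturbation preserves the map to $Y$ in the same way.
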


The proof follows a similar strategy as above. In fact, when $\Delta$ is a $\Q$-divisor, then the result follows almost immediately from the main technical result of \cite{Has20}, and the argument is essentially a footnote to \emph{op.\ cit.}: indeed, in the case of divisors with rational coefficients, the main contribution of this paper is to observe that \emph{op.\ cit.}\ applies to rational maps for which the base is not uniruled, such as MRC fibrations. As in the proof of Theorem \ref{thm:MRC}, the case when $\Delta$ is not a $\Q$-divisor is somewhat more involved. I could have written the proof of Theorem \ref{thm:MRC} by using \cite{Has20} instead of \cite{KP17}, which would have shortened the argument (note however, that \cite{Has20} uses \cite{KP17} as a starting point). I think the proof of Theorem \ref{thm:MRC} below makes the argument more transparent and its logic clearer, although this is a matter of taste.

Previously (apart from the case of semipositive canonical bundles \cite{LP18a,GM17} or large Euler--Poincar\'e characteristic \cite{LP18a}), the Abundance conjecture has been (essentially) solved for a pair $(X,\Delta)$ only when one knows the existence of a nontrivial map from $X$ to some abelian variety \cite{Fuj13,BirChen15,Hu16}. Theorem \ref{thm:main} extends those results, see Corollary \ref{cor:abelian}.

\medskip

I end the paper with a section on rationally connected pairs. Consider the following:

\begin{NonCon}\label{con:RC}
There does not exist a klt pair $(X,\Delta)$ such that $X$ is rationally connected, $\Delta$ is a nef $\Q$-divisor whose support is a prime divisor, $\kappa(X,\Delta)=0$, $K_X+\Delta\sim_\Q0$ and $\Delta\cdot C>0$ for every curve $C$ on $X$ passing through a very general point on $X$.
\end{NonCon}

The conjecture follows from the Abundance conjecture: indeed, if a pair $(X,\Delta)$ as in the conjecture existed, consider the klt pair $\big(X,(1+\varepsilon)\Delta\big)$ for some small positive rational number $\varepsilon$. Then $K_X+(1+\varepsilon)\Delta\sim_\Q\varepsilon\Delta$ is nef, hence semiample by the Abundance conjecture. The condition $\kappa(X,\Delta)=0$ implies $\Delta\sim_\Q0$, which contradicts the condition that $\Delta$ intersects non-trivially every curve through a very general point on $X$ (or, indeed, contradicts the assumption that the support of $\Delta$ is non-trivial).

The conjecture predicts the nonexistence of a very special class of pairs. Note that it is a priori irrelevant that $X$ is rationally connected; it is my hope that the presence of a lot of rational curves will make the eventual proof easier.

The main result of Section \ref{sec:4} is:

\begin{thmA}\label{thm:RC}
Assume the existence of good models for non-uniruled klt pairs with boundaries with rational coefficients in dimension $n-1$.

Let $(X,\Delta)$ be a projective log canonical pair of dimension $n$ such that $K_X+\Delta$ is pseudoeffective and $X$ is rationally connected. If the Nonexistence conjecture holds in dimension $n$, then $K_X+\Delta$ has a good model. 
\end{thmA}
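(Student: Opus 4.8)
The plan is to reduce to the case where $K_X+\Delta$ is nef and then to split according to the Kodaira dimension of $K_X+\Delta$, so that the only case not settled by standard machinery becomes precisely the one ruled out by the Nonexistence conjecture. First I would reduce to the situation where $\Delta$ is a $\Q$-divisor and $(X,\Delta)$ is klt: this is the technically more involved reduction, carried out exactly as in the harder half of Theorem~\ref{thm:MRC}, by perturbing $\Delta$ inside a rational polytope meeting the pseudoeffective cone and applying \cite{DL15,LT19,LM19,HH19} together with the lower-dimensional hypothesis. Since $X$ is rationally connected it is uniruled, so $(X,\Delta)$ admits a minimal model by \cite{LT19}; as rational connectedness is a birational invariant we may and do assume that $K_X+\Delta$ is nef, and we must show that it is semiample. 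By the Nonvanishing conjecture for uniruled varieties \cite{DHP13,LM19} we have $\kappa:=\kappa(X,K_X+\Delta)\ge 0$.

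If $\kappa=\dim X$ then $K_X+\Delta$ is nef and big, hence semiample by the base-point-free theorem; and if $\kappa=0$ with $\nu(X,K_X+\Delta)=0$, then $K_X+\Delta\equiv 0$ and, being $\Q$-linearly equivalent to an effective divisor, $K_X+\Delta\sim_\Q 0$. So assume $1\le\kappa<\dim X$. Passing to a birational model, we may take the Iitaka fibration $f\colon X\to W$ of $K_X+\Delta$ to be a morphism, with $\dim W=\kappa$ and general fibre $F$; then $(F,\Delta|_F)$ is klt with $\kappa(F,K_F+\Delta|_F)=0$ and $\dim F<\dim X$. Arguing by induction on $\dim X$ — when $F$ is not uniruled using the lower-dimensional hypothesis, when $F$ is uniruled but not rationally connected using Theorem~\ref{thm:MRC} (or Theorem~\ref{thm:main}), and when $F$ is rationally connected using Theorem~\ref{thm:RC} in lower dimension, so that the Nonexistence conjecture is in fact used in dimensions $\le n$ — we obtain that $(F,K_F+\Delta|_F)$ has a good model, whence $K_F+\Delta|_F\sim_\Q 0$. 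Thus $f$ is a klt-trivial fibration, and the canonical bundle formula produces a klt pair $(W,B_W)$ and a nef $\Q$-Cartier moduli divisor $M_W$ with $K_X+\Delta\sim_\Q f^*(K_W+B_W+M_W)$; the class $K_W+B_W+M_W$ is nef (as $f$ is surjective) and big (as $\kappa=\dim W$), hence semiample by the base-point-free theorem for generalized klt pairs, and pulling back shows that $K_X+\Delta$ is semiample. This part of the argument is essentially that of \cite{Lai11}.

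The remaining case is $\kappa=0$ with $\nu(X,K_X+\Delta)\ge 1$, and this is where the Nonexistence conjecture is genuinely needed. Here $K_X+\Delta\sim_\Q G$ for a nonzero effective $\Q$-divisor $G$ with $\kappa(X,G)=0$, and $G$ is nef with $N_\sigma(G)=0$. If the $G$-trivial curves through a very general point of $X$ cover $X$, the associated fibration has lower-dimensional base and klt-trivial general fibre, so one reduces the dimension (in a generalized-pair version of the statement) and concludes by induction; otherwise $G\cdot C>0$ for every curve $C$ through a very general point of $X$. In the latter case one runs an auxiliary MMP and transfers a suitable multiple of $G$ into the boundary — arranging that its support becomes a single prime divisor while keeping the pair klt, the new adjoint divisor $\Q$-linearly trivial, and the Kodaira dimension zero — to obtain a klt pair contradicting the Nonexistence conjecture in dimension $n$. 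Hence this case does not occur, so $\nu=0$ and we are done.

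The main obstacle is precisely this last reduction: turning the failure of semiampleness in the delicate regime $\kappa=0<\nu$ into exactly the shape of pair in the Nonexistence conjecture — in particular controlling the support of the relevant nef divisor under the auxiliary MMP and checking that positivity along curves through a very general point is the only genuine obstruction, the remaining dichotomy being absorbed by a dimension drop. Everything else is a combination of the canonical bundle formula, the base-point-free theorem, and the techniques already developed for Theorem~\ref{thm:MRC}; a secondary point to be careful about is that the induction in the second paragraph uses the Nonexistence conjecture in all dimensions $\le n$, not merely in dimension $n$.
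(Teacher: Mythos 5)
Your outline identifies the right target (a pair of exactly the shape forbidden by the Nonexistence conjecture) but the step that actually produces such a pair is asserted rather than proved, and you yourself flag it as ``the main obstacle''. That step is the entire content of the paper's argument. Concretely, in the regime $\kappa(X,K_X+\Delta)=0$ the paper does \emph{not} start from a nef $K_X+\Delta$ and try to move $G$ into the boundary; it writes $K_X+\Delta\sim_\Q D=\sum d_iD_i$ with $\Supp D=\Supp\Delta=\Supp\sum\delta_iD_i$ (arranged by a small perturbation $\Delta\mapsto\Delta+\varepsilon D$), observes that some component must satisfy $\delta_1>d_1$ (else $K_X\sim_\Q\sum(d_i-\delta_i)D_i\geq0$, impossible), and then uses $\kappa(X,D)=0$ to show that $d_1$ is exactly the pseudoeffective threshold of $K_X+\Delta-tD_1$: any effective representative of $K_X+\Delta-(d_1+\varepsilon)D_1$ would force $\mult_{D_1}D\geq d_1+\varepsilon$. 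Theorem \ref{thm:DL}(b) then gives a good model of $(X,\Delta-d_1D_1)$ with $\kappa=\nu=0$, whence $\sigma_{D_i}(K_X+\Delta-d_1D_1)=d_i>0$ for $i>1$ by \cite[Corollary V.1.12]{Nak04}; after a further perturbation these $\sigma$'s stay positive for $K_X+\Delta$ itself, so a $(K_X+\Delta)$-MMP (terminating by \cite[Theorem F]{LT19} because $\kappa=0$) contracts every $D_i$ with $i>1$ by \cite[Th\'eor\`eme 3.3]{Dru11}, leaving $K_{X_{\min}}+\delta_1\Gamma\sim_\Q d_1\Gamma$ with $\Gamma$ prime and nef. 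Only then does the nef reduction of $\Gamma$ give the trichotomy you describe. Without this coefficient comparison and the Nakayama $\sigma$-function computation, there is no reason the support of your nef divisor collapses to a single prime divisor, and the contradiction with the Nonexistence conjecture cannot be extracted. So the proposal has a genuine gap at its crux.

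A secondary problem: your treatment of the case $1\leq\kappa<\dim X$ by induction on the fibres of the Iitaka fibration invokes, as you acknowledge, the Nonexistence conjecture in all dimensions $\leq n$, which proves a strictly weaker statement than the theorem (which assumes it only in dimension $n$). This detour is unnecessary: the hypothesis on good models for non-uniruled klt pairs in dimension $n-1$ bootstraps, via \cite[Theorem 1.3 and Lemmas 2.3 and 2.4]{LM19}, to the existence of good models for \emph{all} log canonical pairs in dimension $\leq n-1$, so the fibres of the Iitaka fibration are handled with no appeal to the Nonexistence conjecture; the entire case $\kappa\geq1$ is Theorem \ref{thm:Lai}. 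Your reductions to the klt $\Q$-boundary case and your disposal of $\kappa=\nu=0$ are fine and match the paper.
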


I believe the proof of Theorem \ref{thm:RC} is interesting in its own right and is of independent interest.

\section{Preliminaries}\label{section:prelim}

I work over $\C$. Unless explicitly stated otherwise, all varieties in the paper are normal and projective. A \emph{fibration} is a projective surjective morphism with connected fibres. A \emph{birational contraction} is a birational map whose inverse does not contract any divisors. 

If $X$ is a smooth projective variety, $D$ is a pseudoeffective $\R$-divisor on $X$ and $\Gamma$ is a prime divisor on $X$, then $\sigma_\Gamma(D)$ denotes Nakayama's $\sigma$-function of $D$ along $\Gamma$, see\ \cite[Chapter III]{Nak04}.

The standard reference for the definitions and basic results on the singularities of pairs and the Minimal Model Program is \cite{KM98}. A \emph{pair} $(X,\Delta)$ in this paper always has a boundary $\Delta\geq0$. A pair $(X,\Delta)$ has a \emph{boundary with rational coefficients} if the coefficients of $\Delta$ are rational numbers and $K_X+\Delta$ is $\Q$-Cartier. Unless otherwise stated, in a pair $(X,\Delta)$ the boundary $\Delta$ always has real coefficients.

We recall the following fundamental result.

\begin{thm}\label{thm:fingen}
Let $f\colon X \to Y$ be a projective morphism of quasi-projective varieties and let $\Delta$ be an effective $\Q$-divisor on $X$ such that the pair $(X,\Delta)$ is klt. Then the relative canonical ring
$$R(X/Y,K_X + \Delta) = \bigoplus_{n\in\N}f_*\mathcal O_X\big(\lfloor n(K_X + \Delta)\rfloor\big)$$
is a finitely generated $\mathcal O_Y$-algebra.
\end{thm}

Theorem \ref{thm:fingen} is stated in this form in \cite[Theorem 6.6]{Kaw09}. The sketch of the proof goes as follows: when $\Delta$ is $f$-big, then this is \cite[Corollary 1.1.9]{BCHM} or \cite[Theorem A]{CL12a} (see the first paragraph on \cite[p.\ 2417]{CL12a} and \cite[Theorem 9]{CL13}). In the general case, by applying the canonical bundle formula \cite[Section 4]{FM00} (see also \cite[Sections 4 and 5]{FL20} for a thorough discussion) to the relative Iitaka fibration $g\colon X\dashrightarrow Z$ associated to $K_X+\Delta$ over $Y$, we obtain a klt pair $(Z,\Delta_Z)$ such that $K_Z+\Delta_Z$ if big over $Y$ and a Veronese subalgebra of $R(X/Y,K_X+\Delta)$ is isomorphic to a Veronese subalgebra of $R(Z/Y,K_Z+\Delta_Z)$, see \cite[Definition 2.24]{CL12a}. Then an easy trick allows to assume that $\Delta_Z$ is big over $Y$, see the proof of \cite[Theorem 1.2]{BCHM}, and we are done by above.

The following result of Shokurov and Birkar \cite[Proposition 3.2(3)]{Bir11} will be used several times in the arguments below.

\begin{thm}\label{thm:ShoBir}
Let $X$ be a $\Q$-factorial projective variety, let $D_1,\dots,D_r$ be prime divisors on $X$, and denote $V=\bigoplus_{i
=1}^r\R D_i\subseteq\Div_\R(X)$. Then:
\begin{enumerate}[\normalfont (a)]
\item the set $\mathcal N(V)=\{\Delta\in V\mid (X,\Delta)\text{ is log canonical and }K_X+\Delta\text{ is nef}\,\}$ is a rational polytope,
\item if $\Delta\in\mathcal N(V)$ such that $(X,\Delta)$ is klt, then there exist finitely many $\Q$-divisors $\Delta_i$ on $X$ and positive real numbers $r_i$ such that each pair $(X,\Delta_i)$ is klt, each $K_X+\Delta_i$ is nef, $\Delta=\sum r_i\Delta_i$ and $K_X+\Delta=\sum r_i(K_X+\Delta_i)$.
\end{enumerate}
\end{thm}

\begin{proof}
Part (a) follows immediately by applying \cite[Proposition 3.2(3)]{Bir11} to the collection of \emph{all} extremal rays of the cone of curves $\NEb(X)$. For (b), pick a rational polytope $\mathcal P\subseteq V$ which contains $\Delta$ such that $(X,\Delta')$ is klt for every $\Delta'\in\mathcal P$. Then $\mathcal P\cap\mathcal N(V)$ is a rational polytope by (a), and set $\Delta_i$ to be its vertices.
\end{proof}

\subsection*{Invariant and numerical Kodaira dimensions}

If $X$ is a normal projective variety and $D$ is a pseudoeffective $\R$-Cartier $\R$-divisor on $X$, then $\kappa_\iota(X,D)$ denotes the \emph{invariant Kodaira dimension} of $D$, see \cite{Cho08}; if the divisor $D$ has rational coefficients or $D\geq0$, its Kodaira dimension is denoted by $\kappa(X,D)$. I denote by $ \nu(X,D) $ the \emph{numerical dimension} of $ D $, see \cite[Chapter V]{Nak04}, \cite{Kaw85}; this was denoted by $\kappa_\sigma$ in \cite{Nak04}. 

I use frequently and without explicit mention the following properties:
\begin{enumerate}[\normalfont (a)]
\item if $D$ is an $\R$-Cartier $\R$-divisor on a normal projective variety $X$ and if $f\colon Y\to X$ is a surjective morphism from a normal projective variety $Y$, then
$$\kappa_\iota(X,D)=\kappa_\iota(Y,f^*D)\quad\text{and}\quad\nu(X,D)=\nu(Y,f^*D),$$
and if additionally $f$ is birational and $E$ is an effective $f$-exceptional divisor on $Y$, then
$$\kappa_\iota(X,D)=\kappa_\iota(Y,f^*D+E)\quad\text{and}\quad\nu(X,D)=\nu(Y,f^*D+E);$$
see for instance \cite[\S2.2]{LP18a} for references and discussion,
\item if $D_1$ and $D_2$ are effective $\R$-Cartier $\R$-divisors on a normal projective variety $X$ such that $\Supp D_1=\Supp D_2$, then $\kappa_\iota(X,D_1)=\kappa_\iota(X,D_2)$ and $\nu(X,D_1)=\nu(X,D_2)$; the proof is easy and the same as that of \cite[Lemma 2.9]{DL15}.
\end{enumerate}

\subsection*{Good models}
Let $X$ and $Y$ be normal varieties, and let $D$ be an $\R$-Cartier $\R$-divisor on $X$. A birational contraction $f\colon X\dashrightarrow Y$ is a \emph{good model for $D$} if $f_*D$ is $\R$-Cartier and semiample, and if there exists a resolution of indeterminacies $(p,q)\colon W\to X\times Y$ of the map $f$ such that $p^*D=q^*f_*D+E$, where $E\geq0$ is a $q$-exceptional $\R$-divisor which contains the whole $q$-exceptional locus in its support. 

The following results will be used often in the remainder of the paper.

\begin{thm}\label{thm:Lai}
Assume the existence of good models for non-uniruled klt pairs with boundaries with rational coefficients in dimension $n-1$.

Let $(X,\Delta)$ be a projective log canonical pair of dimension $n$ such that $\Delta$ is a $\Q$-divisor. If $\kappa(X,K_X+\Delta)\geq1$, then $(X,\Delta)$ has a good model.
\end{thm}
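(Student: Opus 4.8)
The plan is to deduce the statement from the main theorem of \cite{Lai11}, which says that a klt pair $(X,\Delta)$ with $K_X+\Delta$ pseudoeffective has a good model as soon as the very general fibre of the Iitaka fibration of $K_X+\Delta$ has a good model.

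The first step is to reduce to the case where $(X,\Delta)$ is klt. Passing to a $\Q$-factorial dlt modification $\nu\colon(X',\Delta')\to(X,\Delta)$, which is crepant, we have $\kappa(X',K_{X'}+\Delta')=\kappa(X,K_X+\Delta)\ge1$ by property (a) in Section 2, and a good model for $(X',\Delta')$ yields one for $(X,\Delta)$; so we may assume $(X,\Delta)$ is $\Q$-factorial dlt. The passage from dlt to klt is the one genuinely delicate point: one uses the standard reduction of the existence of good models for dlt pairs to the klt case, by adjunction to the strata of $\lfloor\Delta\rfloor$ and induction on the dimension (in the style of \cite{DL15} and the references there), or one simply quotes the log canonical version of \cite{Lai11} available in the literature (see \cite{Has19}), which applies directly; in either case $\kappa\ge1$ remains available for the pairs to which \cite{Lai11} is eventually applied.

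So assume $(X,\Delta)$ is klt, and let $f\colon X\dashrightarrow Z$ be the Iitaka fibration of $K_X+\Delta$; then $\dim Z=\kappa(X,K_X+\Delta)\ge1$. If $\dim Z=n$, then $K_X+\Delta$ is big, the very general fibre of $f$ is a point, and the conclusion is again \cite{Lai11}; so assume $1\le\dim Z<n$ and let $F$ be a very general fibre. By adjunction $(F,\Delta|_F)$ is klt with rational boundary, $\dim F<n$, and by the defining property of the Iitaka fibration $\kappa(F,K_F+\Delta|_F)=\kappa\big(F,(K_X+\Delta)|_F\big)=0$; in particular $K_F+\Delta|_F$ is pseudoeffective. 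Since the assumed existence of good models for non-uniruled klt pairs with rational boundaries in dimension $n-1$ yields, together with \cite{DL15} applied inductively in dimensions $\le n-1$, the existence of good models for all klt pairs with rational boundary in dimension $\le n-1$, the pair $(F,\Delta|_F)$ has a good model. Hence the very general fibre of the Iitaka fibration of $K_X+\Delta$ has a good model, and \cite{Lai11} produces a good minimal model for $(X,\Delta)$, which in particular is a good model in the sense of Section 2.

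I expect the main obstacle to be the reduction from dlt (equivalently, log canonical) to klt: this is a substantial ingredient drawn from recent work, whereas once one is in the klt category the statement is an essentially formal application of \cite{Lai11} together with the lower-dimensional hypothesis. A secondary point to be careful about is the dimension bookkeeping: the hypothesis is stated in dimension $n-1$, so one should confirm, again via \cite{DL15} used inductively, that good models are in fact available for klt pairs with rational boundary in every dimension $\le n-1$, as needed to treat fibres $F$ with $\dim F<n-1$.
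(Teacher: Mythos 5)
Your treatment of the klt case is correct and is essentially the paper's argument: both reduce to Lai's theorem that a klt pair has a good model once the very general fibre of the Iitaka fibration of $K_X+\Delta$ does, with the fibre handled by the lower-dimensional hypothesis (bootstrapped from ``non-uniruled klt in dimension $n-1$'' to all klt pairs with rational boundary in dimension $\le n-1$; the paper performs this bootstrap via \cite[Theorem 1.3 and Lemmas 2.3 and 2.4]{LM19} rather than \cite{DL15}, and in fact obtains good models for all log canonical pairs in dimension $\le n-1$). The paper also records the caveat that \cite[Theorem 4.4]{Lai11} is stated for terminal varieties and must be adapted to klt pairs by replacing \cite[Lemma 2.2]{Lai11} with \cite[Lemma 2.10]{HX13}; this is worth keeping.

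The genuine problem is your reduction from log canonical to klt, which you yourself flag as the delicate point. Your first proposed route --- adjunction to the strata of $\lfloor\Delta\rfloor$ and induction on dimension --- does not work as stated: the existence of good models for dlt pairs does not formally reduce to the klt case by restricting to the boundary, since one must glue pluricanonical sections coming from the different strata, which requires the finiteness of B-pluricanonical representations and the machinery of Fujino, Fujino--Gongyo and Hacon--Xu; it is precisely the kind of step that cannot be dispatched as ``standard''. Your second route (quoting a log canonical version of Lai's theorem from the literature) is plausible --- Hashizume has results of exactly this shape --- but you neither state the result nor verify its hypotheses. The paper avoids the reduction altogether: in the log canonical case it first produces a minimal model using \cite[Theorem B]{LT19} (applicable since $\kappa\ge 1$ gives effectivity), then invokes \cite[Proposition 3.1]{Fuk02} to conclude $\kappa(X,K_X+\Delta)=\nu(X,K_X+\Delta)$ for the resulting nef pair, and finishes with \cite[Lemma 4.1]{LM19}, which says that abundant nef log canonical divisors are semiample under the lower-dimensional assumptions. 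You should either adopt that chain of citations or state and check the precise log canonical analogue of Lai's theorem you intend to use.
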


\begin{proof}
By \cite[Theorem 1.3 and Lemmas 2.3 and 2.4]{LM21} we may assume the existence of good models for log canonical pairs in dimensions at most $n-1$. 

If $(X,\Delta)$ is klt, then the result follows by combining \cite[Propositions 2.4 and 2.5, and Theorem 4.4]{Lai11}; note that \cite[Theorem 4.4]{Lai11} is stated for a terminal variety $X$, but the proof generalises to the context of klt pairs by replacing \cite[Lemma 2.2]{Lai11} with \cite[Lemma 2.10]{HX13}. Alternatively, this is a special case of \cite[Theorem 2.12]{HX13} by Theorem \ref{thm:fingen}; see also \cite[Theorem 1.3]{Has18a}.

In general, the pair $(X,\Delta)$ has a minimal model by \cite[Theorem B]{LT22}, hence we may assume that $K_X+\Delta$ is nef. Then $\kappa(X,K_X+\Delta)=\nu(X,K_X+\Delta)$ by \cite[Proposition 3.1]{Fuk02}, and we conclude by \cite[Lemma 4.1]{LM21}.
\end{proof}

\begin{thm}\label{thm:DL}
Assume the existence of good models for non-uniruled klt pairs with boundaries with rational coefficients in dimension $n-1$.
\begin{enumerate}[\normalfont (a)]
\item Let $(X,\Delta)$ be a log canonical pair of dimension $n$ such that $X$ is uniruled and $K_X+\Delta$ is pseudoeffective. Then $\kappa_\iota(X,K_X+\Delta)\geq0$.
\item Let $(X,\Delta)$ be a klt pair of dimension $n$ such that $\Delta$ is a $\Q$-divisor. Let $G\neq0$ be an effective $\Q$-Cartier $\Q$-divisor such that $(X,\Delta+G)$ is klt and $K_X+\Delta+G$ is pseudoeffective. Assume that $K_X+\Delta+(1-\varepsilon)G$ is not pseudoeffective for any $\varepsilon>0$. Then there exists a good model of $(X,\Delta+G)$. 
\end{enumerate}
\end{thm}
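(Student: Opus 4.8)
Both parts are, in essence, known results, and I expect the published argument to be short: for part (a) I would invoke the Nonvanishing conjecture for uniruled log canonical pairs, which holds under the stated lower-dimensional hypothesis by \cite{DHP13,LM19}. To match the hypotheses, first pass to a $\Q$-factorial dlt modification $(X',\Delta')\to(X,\Delta)$, which preserves uniruledness and, by the birational invariance properties of $\kappa_\iota$ recalled above, also preserves $\kappa_\iota(X,K_X+\Delta)$; then reduce to a klt pair with rational boundary by slightly lowering the coefficient-one part of $\Delta$ and perturbing the remaining coefficients to nearby rationals, controlling $\kappa_\iota$ throughout by those same properties and a convexity argument in the space of boundaries. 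For a uniruled klt pair with rational boundary, nonvanishing is exactly \cite{DHP13,LM19}. The only mildly delicate step is the log canonical to klt reduction, which is standard and is carried out in \cite{LM19}.

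For part (b), first note that $X$ is uniruled: since $K_X+\Delta+\tfrac12 G$ is not pseudoeffective while $\Delta+\tfrac12 G\geq0$, the divisor $K_X$ is not pseudoeffective, so $X$ is uniruled by \cite{BDPP}. Applying part (a) to the klt pair $(X,\Delta+G)$, whose adjoint divisor $K_X+\Delta+G$ is pseudoeffective, gives $\kappa(X,K_X+\Delta+G)=\kappa_\iota(X,K_X+\Delta+G)\geq0$. If this Kodaira dimension is at least $1$, then $(X,\Delta+G)$ has a good model by Theorem \ref{thm:Lai}, so we may assume $\kappa(X,K_X+\Delta+G)=0$, and I expect one then proves the stronger conclusion $K_X+\Delta+G\sim_\Q0$. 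Write $K_X+\Delta+G\sim_\Q D$ with $D\geq0$ the unique effective divisor in this class; the hypothesis forces some prime component of $G$ to be absent from $\Supp D$, for otherwise $D-\varepsilon G\geq0$ for $0<\varepsilon\ll1$ and $K_X+\Delta+(1-\varepsilon)G\sim_\Q D-\varepsilon G$ would be pseudoeffective. Using this, I would run a $\big(K_X+\Delta+(1-\varepsilon)G\big)$-MMP for $0<\varepsilon\ll1$, chosen to work simultaneously for all small $\varepsilon$, which terminates with a Mori fibre space $\psi\colon X'\to T'$ because $K_X+\Delta+(1-\varepsilon)G$ is not pseudoeffective; letting $\varepsilon\to0$, the divisor $K_{X'}+\Delta'+G'$ is pseudoeffective and numerically trivial on the general fibre of $\psi$, so by the canonical bundle formula it descends to a lower-dimensional pseudoeffective adjoint divisor, which has a good model by the lower-dimensional results (good models for log canonical pairs in dimension $\leq n-1$, available as in the proof of Theorem \ref{thm:Lai}) together with \cite{LT19}; lifting this model finishes the proof. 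This is the strategy of \cite{DL15}, whose arguments apply in the present setting.

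The main obstacle is exactly this $\kappa=0$ case: running the minimal model program and passing to the limit $\varepsilon\to0$ while keeping track of the $\sigma$-decomposition and of the pseudoeffectivity threshold, establishing numerical triviality of $K_{X'}+\Delta'+G'$ on the fibres of $\psi$, and then carrying out the descent through the canonical bundle formula --- in particular handling its moduli part so as to be able to invoke the lower-dimensional results.
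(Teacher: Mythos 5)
Your proposal is correct and takes essentially the same route as the paper: part (a) is exactly \cite[Theorem 1.1]{LM19} (nonvanishing for uniruled log canonical pairs), and part (b) is obtained by first upgrading the lower-dimensional hypothesis to good models for log canonical pairs in dimension at most $n-1$ via \cite{LM19} and then invoking \cite[Theorem 3.3]{DL15}, whose Mori-fibre-space and canonical-bundle-formula strategy is precisely what you sketch. The paper's proof is purely by these citations, so the details you flag as delicate are exactly the ones delegated to \cite{DL15}.
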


\begin{proof}
Part (a) follows from \cite[Theorem 1.1]{LM21}. For (b), note first that we may assume the existence of good models for log canonical pairs in dimensions at most $n-1$ by \cite[Theorem 1.3 and Lemmas 2.3 and 2.4]{LM21}. Then the result follows from \cite[Theorem 3.3]{DL15}.
\end{proof}

\begin{rem}
Let $(X,\Delta)$ be a projective log canonical pair such that $K_X+\Delta$ is nef. If $(X,\Delta)$ has a good model, then $K_X+\Delta$ is semiample; this follows from the proof of \cite[Lemma 4.1]{LM21}. I use this fact in the remainder of the paper without explicit mention.
\end{rem}

\subsection*{MRC fibrations}

A proper variety $X$ is:
\begin{enumerate}[\normalfont (a)]
\item \emph{uniruled} if a general point on $X$ lies in a rational curve in $X$,
 
\item \emph{rationally chain connected} if any two very general points on $X$ can be joined by a chain of rational curves, or equivalently, if \emph{any two points on $X$} can be joined by a chain of rational curves, see \cite[Corollary IV.3.5]{Kol96};

\item \emph{rationally connected} if any two very general points on $X$ can be joined by a rational curve, or equivalently, if \emph{any two points on $X$} can be joined by a rational curve, see \cite[Theorem 2.1]{KMM92}.
\end{enumerate} 

If $X$ is a normal proper variety, then it admits a dominant almost holomorphic map $\pi\colon X \dashrightarrow Z$, called  \emph{maximal rationally chain connected} or \emph{MRCC fibration}, to a smooth variety such that the complete fibres of $\pi$ are rationally chain connected and for a very general fibre $F$ of $\pi$, any rational curve in $X$ which intersects $F$ lies in $F$, see \cite[Section IV.5]{Kol96}.

Now, let $(X,\Delta)$ be a projective dlt pair. Then $X$ is rationally chain connected if and only if $X$ is rationally connected by \cite[Corollary 1.5(2)]{HM07a}. If $\pi\colon X \dashrightarrow Z$ is an MRCC fibration of $X$, this implies that the complete fibres of $\pi$ are rationally connected -- we say that $\pi$ is a \emph{maximal rationally connected fibration} or \emph{MRC fibration} of $X$. Moreover, the variety $Z$ is not uniruled by \cite[Corollary 1.4]{GHS03}, see \cite[Lemma 3.17]{LMPTX22} for details. This implies that $\dim Z=\dim X$ if and only if $X$ is not uniruled, and note that $Z$ is a point if and only if $X$ is rationally connected. 

\section{Proofs of the main results}

In this section I prove Theorems \ref{thm:MRC} and \ref{thm:main}; Corollary \ref{cor:abelian} is an immediate consequence.

I start with the following result which was essentially proved in \cite{LT22,LM21}.

\begin{thm}\label{thm:tau}
Assume the existence of good models for non-uniruled klt pairs with boundaries with rational coefficients in dimension $n-1$.

Let $(X,\Delta)$ be a $\Q$-factorial dlt pair of dimension $n$ such that $K_X+\Delta$ is pseudoeffective. 
\begin{enumerate}[\normalfont (a)]
\item If $K_X+\Delta-\varepsilon\lfloor\Delta\rfloor$ is not pseudoeffective for any $\varepsilon>0$, then $(X,\Delta)$ has a good model.
\item Assume additionally that $(X,\Delta)$ is klt. If $K_X+(1-\varepsilon)\Delta$ is not pseudoeffective for any $\varepsilon>0$, then $(X,\Delta)$ has a good model.
\end{enumerate}
\end{thm}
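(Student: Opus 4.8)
The plan is to reduce both statements to the theorems already available, namely Theorem \ref{thm:Lai}, Theorem \ref{thm:DL} and the existence of minimal models for log canonical pairs from \cite{LT19}, by perturbing the boundary so as to bring the coefficients into the rational range and then transferring the conclusion back. Since a good model is a birational-contraction invariant and since $(X,\Delta)$ is $\Q$-factorial dlt, I may freely pass to small modifications and run $(K_X+\Delta)$-MMPs; the first task is therefore to replace the hypotheses "not pseudoeffective for any $\varepsilon>0$" by the cleaner statement that the relevant boundary component lies on the boundary of the pseudoeffective cone, so that the techniques of \cite{DL15} apply directly.

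For part (a): write $\lfloor\Delta\rfloor=\sum\Delta_i$ and consider the half-line $t\mapsto K_X+\Delta-t\lfloor\Delta\rfloor$. The hypothesis says $K_X+\Delta$ is the extremal pseudoeffective point on this segment. First I would reduce to the case $\lfloor\Delta\rfloor\neq0$ (otherwise the hypothesis forces $K_X+\Delta$ non-pseudoeffective unless $K_X+\Delta$ is already on the boundary for a trivial reason — in fact if $\lfloor\Delta\rfloor=0$ the condition is vacuous only when... so more carefully, if $\lfloor\Delta\rfloor=0$ then $K_X+\Delta=K_X+\Delta-\varepsilon\lfloor\Delta\rfloor$ and the hypothesis is never satisfied, so we may assume $\lfloor\Delta\rfloor\neq0$). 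Next, approximate: choose a $\Q$-divisor $\Delta'$ with $\Delta'\leq\Delta$, with $\lfloor\Delta'\rfloor=\lfloor\Delta\rfloor$, with $(X,\Delta')$ dlt, and with $\|\Delta-\Delta'\|$ small enough that $K_X+\Delta'$ is still pseudoeffective while $K_X+\Delta'-\varepsilon\lfloor\Delta\rfloor$ remains non-pseudoeffective for all $\varepsilon>0$ — this uses that the pseudoeffective cone is closed and that the extremal condition is an open-type condition in the transverse directions but closed along $\lfloor\Delta\rfloor$; some care is needed since moving $\Delta$ could a priori move the threshold, so I would instead fix the fractional part and only perturb, or invoke the rational polytope structure of the relevant pseudoeffective thresholds from \cite{LT19,LM19}. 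Once $\Delta$ is rational, I apply Theorem \ref{thm:DL}(b) with $\Delta$ there equal to $\Delta-\lfloor\Delta\rfloor$ and $G=\lfloor\Delta\rfloor$ (after first passing to a klt model, e.g. by a small perturbation making the pair klt, or by the standard dlt-to-klt reduction): this yields a good model for the rational pair. Finally I transfer back to the original real pair: by \cite[Theorem B]{LT19} $(X,\Delta)$ has a minimal model $\phi\colon X\dashrightarrow X'$, so I may assume $K_X+\Delta$ nef, and then a good model of the nearby rational pair together with the fact that $\nu$ and $\kappa_\iota$ are continuous/constant along the relevant segment (properties (a),(b) in the Preliminaries, plus \cite[Lemma 4.1]{LM19}) forces $K_X+\Delta$ to be semiample, hence $(X,\Delta)$ has a good model.

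For part (b): now $(X,\Delta)$ is klt and the extremal direction is $\Delta$ itself, i.e. $K_X+\Delta$ is the pseudoeffective-extremal point of the segment $t\mapsto K_X+(1-t)\Delta=K_X+\Delta-t\Delta$. This is exactly the shape of Theorem \ref{thm:DL}(b) with $G=\Delta$ once $\Delta$ is rational, so the only work is the rational approximation: I perturb $\Delta$ to a $\Q$-divisor $\Delta'$ with $(X,\Delta')$ klt and $K_X+\Delta'$ pseudoeffective but $K_X+(1-\varepsilon)\Delta'$ not pseudoeffective for any $\varepsilon>0$. Here one can argue directly: the set of effective $\R$-divisors $D$ supported on $\Supp\Delta$ with $K_X+D$ pseudoeffective and $K_X+(1-\varepsilon)D$ not pseudoeffective for any $\varepsilon$ is cut out inside the rational polyhedral cone of such supported divisors by the (closed) pseudoeffective cone, and the real point $\Delta$ lies on a face; rational points of that face exist and are dense. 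Then Theorem \ref{thm:DL}(b) gives a good model for $(X,\Delta')$, and \cite[Theorem B]{LT19} plus the numerical-dimension argument above transfers semiampleness back to $(X,\Delta)$.

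The main obstacle, in both parts, is the rational-approximation step: one must perturb the real boundary to a rational one \emph{without destroying the extremality hypothesis} "not pseudoeffective for any $\varepsilon>0$", which is a hypothesis about an entire half-line and not just about a single divisor class. Handling this cleanly requires the rational polyhedrality of pseudoeffective thresholds in families of log canonical pairs — this is where \cite{LT19} (existence of minimal models for lc pairs with real coefficients, and the attendant polytope decompositions) and \cite{LM19} do the heavy lifting — together with the care that the perturbation keeps the pair dlt (resp. klt) and keeps the fractional/integral parts in the configuration demanded by Theorem \ref{thm:DL}. Everything after that is formal.
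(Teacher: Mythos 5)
There is a genuine gap, and it sits exactly where you flagged it: the rational approximation step is not just "the main obstacle", it is an obstacle that your argument does not overcome and that the paper deliberately avoids. The hypothesis ``$K_X+\Delta-\varepsilon\lfloor\Delta\rfloor$ (resp.\ $K_X+(1-\varepsilon)\Delta$) is not pseudoeffective for any $\varepsilon>0$'' says that $K_X+\Delta$ sits on the boundary of the pseudoeffective cone in a prescribed direction. The intersection of the pseudoeffective cone with the affine space of divisors of the form $K_X+D$, $\Supp D\subseteq\Supp\Delta$, is a closed convex set, but it is \emph{not} known to be rational polyhedral; neither \cite{LT19} nor \cite{LM19} provides such a statement (what is rational polyhedral, via \cite[Proposition 3.2(3)]{Bir11}, is a decomposition \emph{after} one already has a minimal model on which $K+\Delta$ is nef). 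So the ``face'' containing $\Delta$ need not contain any rational point, and a small perturbation of $\Delta$ generically either lands strictly inside the cone (destroying the extremality hypothesis, so Theorem \ref{thm:DL}(b) no longer applies) or outside it. Two further steps also do not go through as written: in (a), Theorem \ref{thm:DL}(b) requires $(X,\Delta+G)$ to be \emph{klt}, and with $G=\lfloor\Delta\rfloor\neq0$ the pair is only dlt; any perturbation making it klt changes $\lfloor\Delta\rfloor$ and hence the hypothesis. And the ``transfer back'': a good model for a nearby rational pair gives $K_X+\Delta'\sim_\Q D'\geq0$, but $K_X+\Delta\sim_\R D'+(\Delta-\Delta')$ is not an effective divisor with the same support as $D'$, so property (b) of the Preliminaries does not apply and $\kappa_\iota=\nu$ does not propagate.

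For comparison, the paper's proof takes a different route. Part (a) is a direct citation of \cite[Lemma 4.1 and Proposition 4.2]{LM19}. Part (b) for rational $\Delta$ is exactly Theorem \ref{thm:DL}(b); for real $\Delta$ the paper does \emph{not} approximate the boundary but instead follows the MMP-with-scaling argument of \cite[Theorem 3.1]{LT19}: the non-pseudoeffectivity of $K_X+(1-\varepsilon)\Delta$ produces a fibration $g\colon X\to T$ with $\dim T<\dim X$ and $K_X+\Delta\equiv_T0$ (or else $\nu(X,K_X+\Delta)=0$, handled by uniruledness and \cite[Theorem 1.1]{LM19}); then the canonical bundle formula of Ambro and Fujino--Gongyo writes $K_X+\Delta\sim_\R g^*(K_T+\Delta_T)$ for a klt pair $(T,\Delta_T)$, and induction on dimension gives $\kappa_\iota=\nu$. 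The dimension reduction via the canonical bundle formula is the substitute for the rational approximation you were hoping for, and it is the step your proposal is missing.
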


\begin{proof}
By \cite[Theorem 1.3 and Lemmas 2.3 and 2.4]{LM21} we may assume the existence of good models for log canonical pairs in dimensions at most $n-1$. 

Part (a) follows from \cite[Lemma 4.1 and Proposition 4.2]{LM21}. 

For (b), if $\Delta$ is a $\Q$-divisor, this follows from Theorem \ref{thm:DL}(b). In general, I follow closely the proof of \cite[Theorem 3.1]{LT22}. Analogously as in Step 1 of that proof, we may assume the following:
	
	\medskip
	
	\emph{Assumption 1.}
	There exists a fibration $\xi\colon X\to Y$ to a normal projective variety $Y$ with $\dim Y<\dim X$ such that:
	\begin{enumerate}
		\item[(a$_1$)] $\nu(F, (K_X+\Delta)|_F)=0$ and $h^1(F, \mathcal{O}_F)=0$ for a very general fibre $F$ of $\xi$,
		\item[(b$_1$)] $K_X+(1-\varepsilon)\Delta$ is not $\xi$-pseudoeffective for any $\varepsilon>0$,
		\item[(c$_1$)] $(X, \Delta)$ is log smooth.
	\end{enumerate}

	If $\dim Y=0$, then $Y$ is a point and $\nu(X, K_X+\Delta)=0$. By (b$_1$) the divisor $K_X$ is not pseudoeffective, hence $X$ is uniruled by \cite[Corollary 0.3]{BDPP}. Therefore, $\kappa(X, K_X+\Delta)\geq0$ by \cite[Theorem 1.1]{LM21}, and thus $\kappa(X, K_X+\Delta)=\nu(X, K_X+\Delta)\geq0$.  We conclude by \cite[Lemma 4.1]{LM21}.
	
	Otherwise, as in Step 3 of the proof of \cite[Theorem 3.1]{LT22}, only by replacing \cite[Theorem 2.21]{LT22} by \cite[Theorems B, E and 2.22]{LT22} in that proof, we may assume the following:
	
	\medskip
	
	\emph{Assumption 2.}
	There exists a fibration $g\colon X\to T$ to a normal projective variety $T$ such that:
	\begin{enumerate}
	\item[(a$_2$)] $\dim T<\dim X$, and the numerical equivalence over $T$ coincides with the $\R$-linear equivalence over $T$,
	\item[(b$_2$)] $K_X+\Delta\equiv_T0$.
	\end{enumerate}

		By \cite[Theorem 0.2]{Amb05} and \cite[Theorem 3.1]{FG12} there exists an effective $\mathbb{R}$-divisor $\Delta_T$ on $T$ such that $(T, \Delta_T)$ is klt and $K_X+\Delta\sim_{\mathbb{R}}g^*(K_T+\Delta_T)$. By the assumption in lower dimensions, we have 
		$$\kappa_{\iota}(T,K_T+\Delta_T)=\nu(T,K_T+\Delta_T),$$
		and hence $\kappa_{\iota}(X,K_X+\Delta)=\nu(X,K_X+\Delta)$. We conclude by \cite[Lemma 4.1]{LM21}.
\end{proof}

\begin{proof}[Proof of Theorem \ref{thm:MRC}]
By \cite[Theorem 1.3 and Lemmas 2.3 and 2.4]{LM21} we may assume the existence of good models for log canonical pairs in dimensions at most $n-1$. By \cite[Lemma 4.1]{LM21} it suffices to show that
\begin{equation}\label{eq:5}
\kappa_{\iota}(X, K_X+\Delta)=\nu(X, K_X+\Delta).
\end{equation}

\medskip

\emph{Step 1.}
Let $f\colon X\dashrightarrow Y$ be an MRC fibration of $X$ with $Y$ smooth, see \cite[Section IV.5]{Kol96}. Since $X$ uniruled but not rationally connected, we have $0<\dim Y<\dim X$, and $K_Y$ is pseudoeffective by \cite[Corollary 1.4]{GHS03} and \cite[Corollary 0.3]{BDPP}. Thus,
\begin{equation}\label{eq:44}
\kappa(Y,K_Y)\geq0
\end{equation}
by the assumption in lower dimensions.

Let $(p,q)\colon X'\to X\times Y$ be a resolution of indeterminacies of $f$ which is at the same time a log resolution of the pair $(X,\Delta)$. We may write
$$K_{X'}+\Delta'\sim_\R p^*(K_X+\Delta)+E,$$
where $\Delta'$ and $E$ are effective $\R$-divisors without common components. Then it suffices to show that $\kappa_{\iota}(X', K_{X'}+\Delta')=\nu(X', K_{X'}+\Delta')$. Therefore, by replacing $(X,\Delta)$ by $(X',\Delta')$ and $f$ by $q$, we may assume that the pair $(X,\Delta)$ is log smooth and that $f$ is a fibration. 

The divisor $K_X$ is not pseudoeffective since $X$ possesses a free rational curve through a general point, see \cite[\S4.2]{Deb01}.

\medskip

\emph{Step 2.}
In this step I prove the theorem under the following assumption:

\medskip

\emph{Assumption 1.}
The divisor $\Delta$ is a $\Q$-divisor and $\lfloor\Delta\rfloor=0$.

\medskip

Then we have 
$$\kappa(X,K_X+\Delta)\geq0$$
by Theorem \ref{thm:DL}(a).

If $\kappa(X,K_X+\Delta)\geq1$, then the result follows from Theorem \ref{thm:Lai}.

Assume now that
\begin{equation}\label{eq:43}
\kappa(X,K_X+\Delta)=0.
\end{equation}
Let $F$ be a very general fibre of $f$ and note that $0<\dim F<\dim X$. Since $K_F+\Delta|_F$ is pseudoeffective, we have $\kappa(F,K_F+\Delta|_F)\geq0$ by the assumption in lower dimensions.

If $\kappa(F,K_F+\Delta|_F)=\dim F$, then $\kappa(X,K_X+\Delta)\geq\dim F$ by \eqref{eq:44} and by \cite[Theorem 9.9]{KP17} (for $M=K_Y$). But this contradicts \eqref{eq:43}.

Therefore, we may assume that
\begin{equation}\label{eq:1b}
0\leq\kappa(F,K_F+\Delta|_F)<\dim F.
\end{equation}
The pair $(F,\Delta|_F)$ has a good model by the assumption in lower dimensions. Therefore, by \cite[Theorem 2.12]{HX13} and by Theorem \ref{thm:fingen} there exists a relative good minimal model $X_{\min}$ of $(X,\Delta)$ over $Y$, and let $\theta\colon X\dashrightarrow X_{\min}$ be the corresponding birational contraction. Denote $\Delta_{\min}:=\theta_*\Delta$ and let $\tau\colon X_{\min}\to T$ be the relative Iitaka fibration over $Y$ associated to $K_{X_{\min}}+\Delta_{\min}$. Then $\dim T<\dim X$ since $\kappa(F,K_F+\Delta|_F)<\dim F$ by \eqref{eq:1b}. There exists a $\Q$-divisor $A$ on $T$ which is ample over $Y$ such that $K_{X_{\min}}+\Delta_{\min}\sim_\Q\tau^*A$. 
\[
\xymatrix{ 
X \ar@{-->}[rr]^\theta \ar[rd]_f & & X_{\min} \ar[dl] \ar[d]^\tau \\
& Y & T \ar[l]
}
\]
By \cite[Theorem 0.2]{Amb05a} there exists an effective $\Q$-divisor $\Delta_T$ on $T$ such that the pair $(T,\Delta_T)$ is klt and
$$K_{X_{\min}}+\Delta_{\min}\sim_\Q\tau^*(K_T+\Delta_T),$$
and in particular, $K_T+\Delta_T$ is pseudoeffective. By the assumption in lower dimensions, we have $\kappa(T,K_T+\Delta_T)=\nu(T,K_T+\Delta_T)$, and hence 
$$\kappa(X_{\min},K_{X_{\min}}+\Delta_{\min})=\nu(X_{\min},K_{X_{\min}}+\Delta_{\min}),$$
which gives \eqref{eq:5} as desired.

\medskip

\emph{Step 3.}
In this step I prove the theorem under the following assumption:

\medskip

\emph{Assumption 2.}
The divisor $\Delta$ is an $\R$-divisor and $\lfloor\Delta\rfloor=0$.

\medskip

The pair $(X,\Delta)$ has a minimal model $(Z,\Delta_Z)$ by \cite[Theorem C]{LT22}. By Theorem \ref{thm:ShoBir} there exist finitely many $\Q$-divisors $\Delta_i$ on $Z$ and positive real numbers $r_i$ such that each pair $(Z,\Delta_i)$ is klt, each $K_Z+\Delta_i$ is nef and $K_Z+\Delta_Z=\sum r_i(K_Z+\Delta_i)$. By Step 2 there exist semiample $\Q$-divisors $D_i$ such that $K_Z+\Delta_i\sim_\Q D_i$, hence the divisor $K_Z+\Delta_Z\sim_\R\sum r_iD_i$ is semiample.

\medskip

\emph{Step 4.}
Finally, it remains to consider the case $\lfloor\Delta\rfloor\neq0$. If $K_X+\Delta-\varepsilon\lfloor\Delta\rfloor$ is not pseudoeffective for any $\varepsilon>0$, then we conclude by Theorem \ref{thm:tau}(a). Otherwise, pick $\varepsilon>0$ such that $K_X+\Delta-\varepsilon\lfloor\Delta\rfloor$ is pseudoeffective. Then by Step 3 there exists an $\mathbb{R}$-divisor $D\geq0$ such that $K_X+\Delta-\varepsilon\lfloor\Delta\rfloor\sim_\R D$. Pick $0<\delta<\varepsilon$. Then
\begin{equation}\label{eq:3}
K_X+\Delta-\delta\lfloor\Delta\rfloor\sim_\R D+(\varepsilon-\delta)\lfloor\Delta\rfloor\quad\text{and}\quad K_X+\Delta\sim_\R D+\varepsilon\lfloor\Delta\rfloor.
\end{equation} 
Since $\big(X,\Delta-\delta\lfloor\Delta\rfloor\big)$ is a klt pair and $K_X+\Delta-\delta\lfloor\Delta\rfloor$ is pseudoeffective, the pair $\big(X,\Delta-\delta\lfloor\Delta\rfloor\big)$ has a good model by Step 3, and in particular,
\begin{equation}\label{eq:4}
\kappa_\iota(X,K_X+\Delta-\delta\lfloor\Delta\rfloor)=\nu(X,K_X+\Delta-\delta\lfloor\Delta\rfloor).
\end{equation}
Since $\Supp(D+(\varepsilon-\delta)\lfloor\Delta\rfloor)=\Supp(D+\varepsilon\lfloor\Delta\rfloor)$, we obtain \eqref{eq:5} from \eqref{eq:3} and \eqref{eq:4}. This concludes the proof.
\end{proof}

\begin{proof}[Proof of Theorem \ref{thm:main}]
By \cite[Theorem 1.3 and Lemmas 2.3 and 2.4]{LM21} we may assume the existence of good models for log canonical pairs in dimensions at most $n-1$. By \cite[Lemma 4.1]{LM21} it suffices to show that 
$$\kappa_{\iota}(X, K_X+\Delta)=\nu(X, K_X+\Delta).$$

\medskip

\emph{Step 1.}
As in the second paragraph of Step 1 of the proof of Theorem \ref{thm:MRC}, by additionally replacing $f$ by its Stein factorisation, we may assume that the pair $(X,\Delta)$ is log smooth and that $f$ is a fibration. 

Let $\widehat Y\to Y$ be a desingularisation of $Y$ and let $\pi\colon X\dashrightarrow \widehat Y$ be the resulting rational map. Let $(\alpha,\beta)\colon \widehat X\to X\times \widehat Y$ be a resolution of indeterminacies of $\pi$ which is at the same time a log resolution of the pair $(X,\Delta)$.
	\begin{center}
		\begin{tikzcd}
			\widehat X \arrow[d, "\alpha" swap] \arrow[r, "\beta"] & \widehat Y \arrow[d]  \\
			X \arrow[ru, dashed, "\pi"] \arrow[r, "f"] & Y 
		\end{tikzcd}
	\end{center} 
We may write
$$K_{\widehat X}+\widehat \Delta\sim_\R \alpha^*(K_X+\Delta)+G,$$
where $\widehat\Delta$ and $G$ are effective $\R$-divisors without common components. Then it suffices to show that $\kappa_{\iota}\big(\widehat X, K_{\widehat X}+\widehat \Delta\big)=\nu\big(\widehat X, K_{\widehat X}+\widehat \Delta\big)$. Therefore, by replacing $(X,\Delta)$ by $\big(\widehat X, \widehat \Delta\big)$, $Y$ by $\widehat Y$ and $f$ by $\beta$, we may assume that the pair $(X,\Delta)$ is log smooth, that $Y$ is smooth and that $f$ is a fibration.

The divisor $K_Y$ is then pseudoeffective by \cite[Corollary 0.3]{BDPP}, hence
\begin{equation}\label{eq:45}
\kappa(Y,K_Y)\geq0
\end{equation}
by the assumption in lower dimensions.

\medskip

\emph{Step 2.}
Assume in this step that $\Delta$ is a $\Q$-divisor.

Let $F$ be a very general fibre of $f$. Since $K_F+\Delta|_F$ is pseudoeffective, we have $\kappa(F,K_F+\Delta|_F)=\nu(F,K_F+\Delta|_F)$ by the assumption in lower dimensions.

Then $\kappa(X,K_X+\Delta)=\nu(X,K_X+\Delta)$ by \eqref{eq:45} and by \cite[Theorem 1.4(2)]{Has20} (for $M=K_Y$). 

\medskip

\emph{Step 3.}
In this step I assume that $\Delta$ is an $\R$-divisor and $\lfloor\Delta\rfloor=0$.

\medskip

If $K_X+(1-\varepsilon)\Delta$ is not pseudoeffective for all $\varepsilon>0$, then we conclude by Theorem \ref{thm:tau}(b).

Thus, we may assume that there exists $0<\varepsilon\ll1$ such that $K_X+(1-\varepsilon)\Delta$ is pseudoeffective. In particular, there exists a $\Q$-divisor $\widetilde{\Delta}$ such that 
$$(1-\varepsilon)\Delta\leq\widetilde\Delta\leq\Delta\quad\text{and}\quad K_X+\widetilde\Delta\text{ is pseudoeffective}.$$
By Step 2, we have $\kappa\big(X,K_X+\widetilde\Delta\big)\geq0$, so that $\kappa_\iota(X,K_X+\Delta)\geq0$. The pair $(X,\Delta)$ has a minimal model $(X',\Delta')$ by \cite[Theorem B]{LT22}, and observe that there exists a dominant rational map $X'\dashrightarrow Y$. By Theorem \ref{thm:ShoBir} there exist finitely many $\Q$-divisors $\Delta_i$ on $X'$ and positive real numbers $r_i$ such that each pair $(X',\Delta_i)$ is klt, each $K_{X'}+\Delta_i$ is nef and $K_{X'}+\Delta'=\sum r_i(K_{X'}+\Delta_i)$. By Step 2 there exist semiample $\Q$-divisors $D_i$ such that $K_{X'}+\Delta_i\sim_\Q D_i$, hence the divisor $K_{X'}+\Delta'\sim_\R\sum r_iD_i$ is semiample.

\medskip

\emph{Step 4.}
Finally, if $\lfloor\Delta\rfloor\neq0$, we conclude as in Step 4 of the proof of Theorem \ref{thm:MRC}.
\end{proof}

The following result complements \cite{Hu16}. Note that the extension theorem from \cite{DHP13} is not needed, see \cite[Remark 3.7]{Hu16}.

\begin{cor}\label{cor:abelian}
Assume the existence of good models for non-uniruled klt pairs with boundaries with rational coefficients in dimension $n-1$.

Let $(X,\Delta)$ be a projective log canonical pair of dimension $n$ such that $K_X+\Delta$ is pseudoeffective, and let $f\colon X\dashrightarrow A$ be a non-trivial rational map to an abelian variety $A$. Then $(X,\Delta)$ has a good model.
\end{cor}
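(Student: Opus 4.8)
The plan is to deduce Corollary \ref{cor:abelian} directly from Theorem \ref{thm:main} after a short reduction, distinguishing according to the dimension of the image of $f$. First I would take a resolution of indeterminacies $(p,q)\colon X'\to X\times A$ of $f$ with $X'$ smooth, and let $Y\subseteq A$ denote the (normalisation of the) image of $q$, so that $q$ factors as $X'\to Y\hookrightarrow A$ with $Y$ a normal projective variety; since $Y$ is a subvariety of an abelian variety it is not uniruled (subvarieties of abelian varieties have pseudoeffective, indeed nef, canonical bundle, hence cannot be uniruled by \cite[Corollary 0.3]{BDPP}). As in Step 1 of the proofs of Theorems \ref{thm:MRC} and \ref{thm:main}, one may pass from $(X,\Delta)$ to a log smooth model and assume that $f$ itself is a (genuine) morphism $f\colon X\to Y$ onto a smooth projective variety $Y$ which is not uniruled; this uses only that invariant and numerical Kodaira dimensions are preserved under the standard birational modification $K_{X'}+\Delta'\sim_\R p^*(K_X+\Delta)+E$.

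Next I would split into cases according to $\dim Y$. If $0<\dim Y<\dim X$, then the hypotheses of Theorem \ref{thm:main} are met verbatim and we are done. If $\dim Y=\dim X$, then $f$ is generically finite, and after the Stein factorisation $X\to Y'\to Y$ we get a finite morphism $X\to Y'$ with $Y'$ étale over an open subset of $Y\subseteq A$; pulling back a nowhere-vanishing top form shows $K_{Y'}$ (hence $K_X$ up to the ramification divisor) behaves well, but more simply: a generically finite cover of a non-uniruled variety is non-uniruled, so $X$ itself is not uniruled, $K_X$ is pseudoeffective, and then $(X,\Delta)$ has a good model directly by the assumption in lower dimensions together with the abundance-type results quoted in the paper — alternatively one reduces to Theorem \ref{thm:Lai} after checking $\kappa_\iota=\nu$ via \cite[Lemma 4.1]{LM19}. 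Finally, the case $\dim Y=0$ is excluded since $f$ is assumed non-trivial.

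The one genuinely delicate point is the case $\dim Y=\dim X$, i.e.\ when $f$ is generically finite onto its image in $A$: here Theorem \ref{thm:main} does not apply, because it requires $\dim Y<\dim X$. The cleanest way around this is to observe that a generically finite dominant map to a subvariety of an abelian variety forces $X$ to be non-uniruled — indeed, if $X$ were uniruled then a general free rational curve $C\subset X$ would dominate a rational curve in $Y\subseteq A$ (as $f|_C$ is non-constant for a suitable curve through two general points), contradicting the non-uniruledness of subvarieties of abelian varieties — and then $K_X+\Delta$ pseudoeffective together with $X$ not uniruled reduces the statement to known abundance results in dimension $n$ under the stated lower-dimensional hypothesis. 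I expect this uniruledness/non-uniruledness dichotomy, and making the "free rational curve maps to a rational curve in $A$" argument rigorous, to be the main obstacle; everything else is a routine repackaging of the proofs of Theorems \ref{thm:MRC} and \ref{thm:main}.

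Concretely, the steps in order are: (1) resolve $f$ and replace $(X,\Delta)$ by a log smooth model so that $f\colon X\to Y$ is a morphism with $Y$ smooth projective and not uniruled, of dimension $0<\dim Y\le\dim X$; (2) if $\dim Y<\dim X$, invoke Theorem \ref{thm:main}; (3) if $\dim Y=\dim X$, argue that $X$ is not uniruled, so $K_X$ is pseudoeffective, and conclude abundance for $(X,\Delta)$ from the lower-dimensional hypothesis via the results recalled in Theorems \ref{thm:Lai}–\ref{thm:tau} and \cite[Lemma 4.1]{LM19}. This gives a good model for $(X,\Delta)$ in all cases.
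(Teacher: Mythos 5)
Your reduction and the case $0<\dim Y<\dim X$ match the paper: subvarieties of abelian varieties are not uniruled, so Theorem \ref{thm:main} applies verbatim, and $\dim Y=0$ is excluded by non-triviality of $f$. The gap is in your treatment of the generically finite case $\dim Y=\dim X$. You correctly observe that $X$ is then not uniruled (a rational curve through a general point of $X$ cannot be contracted by a generically finite map, so its image would uniruleb $Y\subseteq A$), but you then claim that ``$K_X+\Delta$ pseudoeffective together with $X$ not uniruled reduces the statement to known abundance results in dimension $n$ under the stated lower-dimensional hypothesis.'' No such reduction exists: the hypothesis of the corollary only grants good models for non-uniruled klt pairs in dimension $n-1$, and $(X,\Delta)$ lives in dimension $n$. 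Establishing $\kappa_\iota(X,K_X+\Delta)=\nu(X,K_X+\Delta)$ so as to invoke \cite[Lemma 4.1]{LM19}, or verifying the hypothesis $\kappa(X,K_X+\Delta)\geq1$ of Theorem \ref{thm:Lai}, is exactly the open problem in dimension $n$; non-uniruledness of $X$ gives you nothing here. Indeed, if abundance for non-uniruled $n$-dimensional pairs followed from the $(n-1)$-dimensional hypothesis, the main theorems of the paper would be vastly stronger than stated.

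What actually closes this case in the paper is a genuine dimension-$n$ input specific to abelian varieties: after replacing $A$ by the Albanese variety of $X$, the condition $\dim f(X)=\dim X$ means $X$ has maximal Albanese dimension, and for klt pairs with $\Q$-boundary of maximal Albanese dimension the existence of good models is a theorem of Fujino \cite[Theorem 1.1]{Fuj13} (resting on generic vanishing and the structure theory of abelian varieties, not on induction on dimension). The general log canonical $\R$-boundary case is then reduced to this one exactly as in Steps 3 and 4 of the proof of Theorem \ref{thm:main}. Without citing a result of this kind, your Step (3) does not go through.
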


\begin{proof}
As in Step 1 of the proof of Theorem \ref{thm:MRC}, we may assume that the pair $(X,\Delta)$ is log smooth and $f$ is a morphism. We may assume that $A$ is the Albanese variety of $X$. If $\dim f(X)<\dim X$, then we conclude by Theorem \ref{thm:main}, since the image of $f$ cannot be uniruled as $A$ contains no rational curves. 

Otherwise, $X$ is of maximal Albanese dimension. If $\Delta$ is a $\Q$-divisor and $(X,\Delta)$ is klt, then we conclude by \cite[Theorem 1.1]{Fuj13}. In general, we conclude as in Steps 3 and 4 of the proof of Theorem \ref{thm:main}.
\end{proof}

\section{On rationally connected pairs}\label{sec:4}

As announced in the introduction, in this section I reduce the problem of existence of good models for rationally connected log canonical pairs to a nonexistence statement for a very explicit class of rationally connected varieties of Calabi--Yau type.

\begin{proof}[Proof of Theorem \ref{thm:RC}]
By \cite[Theorem 1.3 and Lemmas 2.3 and 2.4]{LM21} we may assume the existence of good models for log canonical pairs in dimensions at most $n-1$. By \cite[Lemma 4.1]{LM21} it suffices to show that 
$$\kappa_{\iota}(X, K_X+\Delta)=\nu(X, K_X+\Delta).$$
By passing to a log resolution we may assume that $(X,\Delta)$ is log smooth. Then $K_X$ is not pseudoeffective since $X$ possesses a very free rational curve through a general point, see \cite[\S4.3]{Deb01}.

If the theorem holds for klt pairs with boundaries with rational coefficients, then it holds for all log canonical pairs as in Steps 3 and 4 of the proof of Theorem \ref{thm:MRC}.

\medskip

Therefore, from now on I assume that the pair $(X,\Delta)$ is log smooth, that $\lfloor\Delta\rfloor=0$ and that $\Delta$ is a $\Q$-divisor. We have $\kappa(X,K_X+\Delta)\geq0$ by Theorem \ref{thm:DL}(a), and by Theorem \ref{thm:Lai} we may assume that
\begin{equation}\label{eq:14}
\kappa(X,K_X+\Delta)=0.
\end{equation}

\medskip

\emph{Step 1.}
If $K_X+\tau\Delta$ is not pseudoeffective for any $\tau<1$, then we conclude by Theorem \ref{thm:tau}(b). 

Otherwise, pick a rational number $0<\tau<1$ such that $K_X+\tau\Delta$ is pseudoeffective. Then $\kappa(X,K_X+\tau\Delta)\geq0$ by Theorem \ref{thm:DL}(a), hence there exists a $\Q$-divisor $D_\tau\geq0$ such that $K_X+\tau\Delta\sim_\Q D_\tau$. Then for $D:=D_\tau+(1-\tau)\Delta\geq0$ we have
\begin{equation}\label{eq:7}
K_X+\Delta\sim_\Q D,
\end{equation}
and in particular, $\Supp\Delta\subseteq\Supp D$. We may replace $\Delta$ by $\Delta+\varepsilon D$ and $D$ by $(1+\varepsilon)D$ for some rational number $0<\varepsilon\ll1$, so we may assume that $\Supp\Delta=\Supp D$.

\medskip

\emph{Step 2.}
Let 
$$\Delta=\sum\delta_i D_i\quad\text{and}\quad D=\sum d_i D_i,$$
where $D_i$ are prime divisors on $X$, and $\delta_i$ and $d_i$ are positive rational numbers.

If $\delta_i\leq d_i$ for all $i$, then $K_X\sim_\Q\sum(d_i-\delta_i)D_i\geq0$, a contradiction since $K_X$ is not pseudoeffective. Therefore, we may assume without loss of generality that $\delta_1>d_1$.

Since
\begin{equation}\label{eq:6}
K_X+\Delta-d_1 D_1\sim_\Q \sum_{i\neq1}d_i D_i,
\end{equation}
the divisor $K_X+\Delta-d_1D_1$ is pseudoeffective, and I claim that
\begin{equation}\label{eq:claim}
K_X+\Delta-(d_1+\varepsilon)D_1\text{ is not pseudoeffective for any }\varepsilon>0.
\end{equation}
Indeed, assume that there exists a rational number $0<\varepsilon\ll1$ such that $\delta_1>d_1+\varepsilon$ and $K_X+\Delta-(d_1+\varepsilon)D_1$ is pseudoeffective. Note that 
$$\Delta-(d_1+\varepsilon)D_1=(\delta_1-d_1-\varepsilon)D_1+\sum_{i\neq1}\delta_i D_i.$$
Then by Theorem \ref{thm:DL}(a) there exists a $\Q$-divisor $D'\geq0$ such that 
$$K_X+\Delta-(d_1+\varepsilon)D_1\sim_\Q D',$$
and thus $D\sim_\Q D'+(d_1+\varepsilon)D_1$ by \eqref{eq:7}. Since $\kappa(X,D)=0$ by \eqref{eq:14} and \eqref{eq:7}, we have $D=D'+(d_1+\varepsilon)D_1$, and hence $\mult_{D_1}D\geq d_1+\varepsilon$, a contradiction which proves \eqref{eq:claim}.

\medskip

\emph{Step 3.}
Therefore, the pair $(X,\Delta-d_1 D_1)$ has a good model by \eqref{eq:claim} and by Theorem \ref{thm:DL}(b)\footnote{Apply Theorem \ref{thm:DL}(b) for $\Delta=\sum_{i\neq1}\delta_i D_i$ and $G=(\delta_1-d_1)D_1$.}. Since $\kappa(X,K_X+\Delta-d_1 D_1)\leq\kappa(X,K_X+\Delta)$, by \eqref{eq:14} we obtain 
$$\kappa(X,K_X+\Delta-d_1 D_1)=\nu(X,K_X+\Delta-d_1 D_1)=0.$$
By \eqref{eq:6} and by \cite[Corollary V.1.12]{Nak04} this yields
$$\sigma_{D_i}(K_X+\Delta-d_1 D_1)=d_i\quad\text{for all }i>1.$$ 
Pick a rational number $0<\mu\ll1$ such that
$$\sigma_{D_i}(K_X+\Delta-d_1 D_1+\mu D_1)>0\quad\text{for all }i>1;$$
this is possible by \cite[Lemma III.1.7(2)]{Nak04}. By \eqref{eq:6} we have
\begin{equation}\label{eq:8}
K_X+\Delta-d_1 D_1+\mu D_1\sim_\Q \mu D_1+\sum_{i\neq1}d_i D_i,
\end{equation}
and since $\Supp D=\Supp(\mu D_1+\sum_{i\neq1}d_i D_i)$, from \eqref{eq:7} and \eqref{eq:8} we obtain
$$\kappa(X,K_X+\Delta)=\kappa(X,K_X+\Delta-d_1 D_1+\mu D_1)$$
and
$$\nu(X,K_X+\Delta)=\nu(X,K_X+\Delta-d_1 D_1+\mu D_1).$$
Therefore, by replacing $(X,\Delta)$ by $(X,\Delta-d_1 D_1+\mu D_1)$, we may additionally assume that
\begin{equation}\label{eq:1a}
\sigma_{D_i}(K_X+\Delta)>0\quad\text{for all }i>1.
\end{equation}

\medskip

\emph{Step 4.}
We run a $(K_X+\Delta)$-MMP with scaling of an ample divisor. This MMP terminates by \eqref{eq:14} and by \cite[Theorem F]{LT22}, and all divisors $D_i$ for $i>1$ are contracted by this MMP by \eqref{eq:1a} and by \cite[Th\'eor\`eme 3.3]{Dru11}. Denote this MMP by $\varphi\colon X\dashrightarrow X_{\min}$ and let $\Gamma:=\varphi_*D_1$. Then
$$K_{X_{\min}}+\delta_1\Gamma=\varphi_*(K_X+\Delta)\sim_\Q \varphi_*D= d_1\Gamma,$$
and in particular, $K_{X_{\min}}+(\delta_1-d_1)\Gamma\sim_\Q 0$. Note that $\Gamma$ is nef and 
$$\kappa(X_{\min},d_1\Gamma)=\kappa(X,K_X+\Delta)=0.$$

Therefore, by replacing $(X,\Delta)$ by $(X_{\min},\delta_1\Gamma)$, we may assume that $\Delta$ is nef, the support of $\Delta$ is either a prime divisor or empty, that $\kappa(X,\Delta)=0$, and there exists a rational number $0<\delta<1$ such that $K_X+\delta\Delta\sim_\Q0$.

\medskip

\emph{Step 5.}
Let $\pi\colon X\dashrightarrow Z$ be the nef reduction of $\Delta$, see \cite{BCE+}. If $\dim Z=0$, then $\Delta\equiv 0$ and we are done. If $\dim Z=\dim X$, then $\Delta\cdot C>0$ for every curve $C$ on $X$ passing through a very general point on $X$, a contradiction by the Nonexistence conjecture.

Therefore, we may assume that $0<\dim Z<\dim X$. As in Step 1 of the proof of Theorem \ref{thm:MRC} we may assume that the pair $(X,\Delta)$ is log smooth and that $\pi$ is a fibration; moreover, by \cite[Lemma 2.3]{LP20a} we have $\nu(F,K_F+\Delta|_F)=0$ for a very general fibre $F$ of $\pi$: this follows since a nef reduction is an almost holomorphic map. Then we conclude as in Step 2 of the proof of Theorem \ref{thm:MRC}.
\end{proof}

In a special case, one can say more. The following result was motivated by a question from F.\ Meng; he also pointed out to me that the case when $\kappa(X,{-}K_X)>0$ below follows from the case when $\kappa(X,\Delta)>0$.

\begin{thm}\label{thm:RCkappa}
Assume the existence of good models for non-uniruled klt pairs with boundaries with rational coefficients in dimension $n-1$.

Let $(X,\Delta)$ be a projective $\Q$-factorial log canonical pair such that $K_X+\Delta$ is pseudoeffective and $X$ is rationally connected. If $\kappa(X,\Delta)>0$ or $\kappa(X,{-}K_X)>0$, then $(X,\Delta)$ has a good model.
\end{thm}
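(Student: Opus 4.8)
The plan is to reduce to the situation already handled in Theorem~\ref{thm:RC} by producing, from the hypothesis $\kappa(X,\Delta)>0$ or $\kappa(X,-K_X)>0$, enough extra positivity to avoid the Nonexistence conjecture entirely. As in the proof of Theorem~\ref{thm:RC}, using \cite[Theorem 1.3 and Lemmas 2.3 and 2.4]{LM19} and \cite[Lemma 4.1]{LM19} it suffices to show $\kappa_\iota(X,K_X+\Delta)=\nu(X,K_X+\Delta)$, and by passing to a log resolution and then arguing as in Steps 3 and 4 of the proof of Theorem~\ref{thm:MRC} I may assume that $(X,\Delta)$ is log smooth, klt, $\lfloor\Delta\rfloor=0$, and $\Delta$ is a $\Q$-divisor; since $X$ is rationally connected, $K_X$ is not pseudoeffective. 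By Theorem~\ref{thm:DL}(a) we have $\kappa(X,K_X+\Delta)\geq0$, and by Theorem~\ref{thm:Lai} I may assume $\kappa(X,K_X+\Delta)=0$.

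Next I would run the argument of Steps 1--4 of the proof of Theorem~\ref{thm:RC} verbatim: either $K_X+\tau\Delta$ is not pseudoeffective for any $\tau<1$, in which case Theorem~\ref{thm:tau}(b) finishes the proof, or I arrive (after replacing $(X,\Delta)$ by the output $(X_{\min},\delta_1\Gamma)$ of a $(K_X+\Delta)$-MMP with scaling) at the situation where $\Delta$ is nef, $\Supp\Delta$ is a prime divisor or empty, $\kappa(X,\Delta)=0$, and $K_X+\delta\Delta\sim_\Q0$ for some rational $0<\delta<1$. The key point is that the hypotheses on $\kappa(X,\Delta)$ and $\kappa(X,-K_X)$ are preserved under these operations: Steps 1--3 only modify $\Delta$ within its $\R$-linear equivalence class up to support-preserving changes, and the MMP $\varphi$ of Step~4 is $(K_X+\Delta)$-negative, so $\kappa(X_{\min},\Gamma)\geq\kappa(X,\Delta)$ via $\varphi_*$ of a section, and likewise $\kappa(X_{\min},-K_{X_{\min}})=\kappa(X_{\min},\tfrac{1-\delta}{\delta}\cdot\delta\Gamma)$ using $-K_{X_{\min}}\sim_\Q(\delta_1-d_1)\Gamma$... wait, more carefully: after the reduction $-K_X\sim_\Q\delta\Delta$, so $\kappa(X,-K_X)=\kappa(X,\Delta)$, and thus the two hypotheses become equivalent, exactly as F.\ Meng observed. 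So in the reduced situation we have $\kappa(X,\Delta)>0$. But $\kappa(X,\Delta)=0$ by Step~4 of that argument — contradiction — unless the MMP terminates earlier, i.e.\ unless we were already in the case $\dim Z=0$ of Step~5 (where $\Delta\equiv0$ and we are done) before reaching a contradiction. I need to be careful here: the cleanest route is to observe that $\kappa(X,\Delta)>0$ together with $\Delta$ nef and $K_X+\delta\Delta\sim_\Q0$ forces $\kappa_\iota(X,K_X+\Delta)=\kappa_\iota(X,(1-\delta)\Delta)=\kappa(X,\Delta)>0$, hence $K_X+\Delta$ is not torsion, and now the nef reduction $\pi\colon X\dashrightarrow Z$ of $\Delta$ from Step~5 has $\dim Z\geq1$; if $\dim Z=\dim X$ then $\Delta$ is big, so $\kappa(X,\Delta)=n$ and $\kappa_\iota(X,K_X+\Delta)=n=\nu(X,K_X+\Delta)$ and we are done, while if $0<\dim Z<\dim X$ we conclude as in Step~5 of the proof of Theorem~\ref{thm:RC} (via the nef reduction being almost holomorphic, \cite[Lemma 2.3]{LP18b}, and Step~2 of the proof of Theorem~\ref{thm:MRC}). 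In every case the Nonexistence conjecture is never invoked, which is the whole point.

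The main obstacle I anticipate is bookkeeping rather than a genuinely new idea: one must check that each reduction step of the proof of Theorem~\ref{thm:RC} is compatible with carrying along the auxiliary positivity $\kappa(X,\Delta)>0$, and in particular that the $(K_X+\Delta)$-MMP in Step~4 does not destroy it — here the crucial fact is that $\Delta$ itself (not just $K_X+\Delta$) is the divisor whose Kodaira dimension we track, and an MMP for $K_X+\Delta$ need not be an MMP for $\Delta$, so one wants $\Delta$ effective (or $\Q$-linearly equivalent to an effective divisor with controlled support) and then $\varphi_*$ of a nonzero section of $m\Delta$ stays nonzero. Since after Step~1 we have arranged $\Supp\Delta=\Supp D$ with $K_X+\Delta\sim_\Q D\geq0$ and $\kappa(X,D)=0$, the divisor $\Delta$ is not effective-looking directly, but $(1-\delta)\Delta\sim_\Q -K_X+$ (stuff), and the equivalence $\kappa(X,\Delta)=\kappa(X,-K_X)$ lets one phrase everything in terms of $-K_X$, which \emph{is} preserved under any MMP contracting only $(K_X+\Delta)$-negative (equivalently, in the relevant range, $K_X$-negative) extremal rays. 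So the real content is the remark, attributed to F.\ Meng, that the two hypotheses coincide once $K_X+\Delta\sim_\Q0$ up to a positive multiple of $\Delta$; granting that, the proof is a short addendum to the proof of Theorem~\ref{thm:RC} in which the appeal to the Nonexistence conjecture in Step~5 is replaced by the observation that positivity of $\Delta$ makes the nef reduction target either a point or of intermediate dimension with a non-uniruled — in fact Calabi-Yau — base to which the lower-dimensional hypotheses and \cite[Theorem 9.9]{KP17} apply.
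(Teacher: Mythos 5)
Your high-level idea---use the extra positivity to force $\kappa(X,K_X+\Delta)\geq 1$ and conclude by Theorem \ref{thm:Lai}, so that the Nonexistence conjecture is never invoked---is exactly the paper's, but your execution never lands on the one-line argument that makes it work, and the detour you take instead contains a genuine error. The moment you are in the situation of Step 1 of the proof of Theorem \ref{thm:RC}, i.e.\ $K_X+\Delta\sim_\Q D\geq0$ with $\Supp\Delta\subseteq\Supp D$, the hypothesis $\kappa(X,\Delta)>0$ gives $0<\kappa(X,\Delta)\leq\kappa(X,D)=\kappa(X,K_X+\Delta)$, and Theorem \ref{thm:Lai} finishes the proof. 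There is nothing left to do: no MMP, no nef reduction, no Step 5. In particular the hypothesis $\kappa(X,\Delta)>0$ cannot be ``carried through'' Steps 1--4 as you claim, because Step 1 already replaces $\Delta$ by a divisor with support equal to $\Supp D$ and hence with Kodaira dimension $\kappa(X,D)=0$; the contradiction you notice is real, but its correct reading is that the case $\kappa(X,K_X+\Delta)=0$ simply does not occur, not that ``the MMP terminates earlier'' or that one is in the $\dim Z=0$ case. Worse, your fallback ``cleanest route'' rests on the claim that $\dim Z=\dim X$ for the nef reduction implies $\Delta$ is big. This is false: one only has $\kappa(X,\Delta)\leq\nu(X,\Delta)\leq n(\Delta)$, and a nef divisor of maximal nef dimension need not be big---if it were, the Nonexistence conjecture would be superfluous in Theorem \ref{thm:RC} itself.

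Two further points where the bookkeeping you defer actually fails as stated. First, the reduction of the hypothesis $\kappa(X,{-}K_X)>0$ to $\kappa(X,\Delta)>0$ should be done at the very start on the original $X$: by Theorem \ref{thm:DL}(a) one has $K_X+\Delta\sim_\R R\geq0$, whence $\kappa(X,\Delta)\geq\kappa_\iota(X,\Delta-R)=\kappa(X,{-}K_X)>0$. Your route, via the identity $-K_X\sim_\Q\delta\Delta$ available only after the full reduction, requires $\kappa(X,{-}K_X)>0$ to survive a log resolution, which it need not (the discrepancy divisor can have a nonzero positive part, so $\kappa(Y,{-}K_Y)$ can drop, even to $-\infty$). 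Second, $\kappa(X,\Delta)>0$ also does not automatically survive a log resolution, since exceptional divisors of non-negative discrepancy lying over $\Supp\Delta$ do not appear in the new boundary; the paper's proof adds a small multiple of the reduced divisor supported on exactly those exceptional divisors so as to guarantee $\Supp f^*\Delta\subseteq\Supp\Delta_Y'$ and hence $\kappa(Y,\Delta_Y')\geq\kappa(X,\Delta)$. With these two fixes and the direct appeal to Theorem \ref{thm:Lai} above, your argument collapses to the paper's short proof.
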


\begin{proof}
By \cite[Theorem 1.3 and Lemmas 2.3 and 2.4]{LM21} we may assume the existence of good models for log canonical pairs in dimensions at most $n-1$. By \cite[Lemma 4.1]{LM21} it suffices to show that 
\begin{equation}\label{eq:11}
\kappa_{\iota}(X, K_X+\Delta)=\nu(X, K_X+\Delta).
\end{equation}

Assume first that $\kappa(X,{-}K_X)>0$. By Theorem \ref{thm:DL}(a) there exists an $\R$-divisor $R\geq0$ such that $K_X+\Delta\sim_\R R$. In particular,
$$\kappa(X,\Delta)\geq\kappa_\iota(X,\Delta-R)=\kappa(X,{-}K_X)>0.$$
Therefore, for the remainder of the proof I assume that $\kappa(X,\Delta)>0$.

\medskip

\emph{Step 1.}
Let $f\colon Y\to X$ be a log resolution of the pair $(X,\Delta)$. We may write
$$K_Y+\Delta_Y\sim_\R f^*(K_X+\Delta)+E,$$
where $\Delta_Y$ and $E$ are effective $\R$-divisors without common components. Let $G$ be the reduced divisor on $Y$ whose support equals the union of all $f$-exceptional prime divisors on $Y$ whose discrepancies are non-negative. Pick $0<\varepsilon\ll1$ such that the pair $(Y,\Delta_Y')$ is log canonical, where $\Delta_Y'=\Delta_Y+\varepsilon G$. Then we have
$$K_Y+\Delta_Y'\sim_\R f^*(K_X+\Delta)+E+\varepsilon G,$$
and it suffices to show that $\kappa_{\iota}(Y, K_Y+\Delta_Y')=\nu(Y, K_Y+\Delta_Y')$. Note that $\Supp f^*\Delta\subseteq\Supp\Delta_Y'$, hence there exists a positive integer $m$ such that $f^*\Delta\leq m\Delta_Y'$. In particular, we have $0<\kappa(X,\Delta)\leq\kappa(Y,\Delta_Y')$.

Therefore, by replacing $(X,\Delta)$ by $(Y,\Delta_Y')$, we may assume that $(X,\Delta)$ is log smooth.

\medskip

\emph{Step 2.}
Assume first that $\lfloor\Delta\rfloor=0$ and that $\Delta$ is a $\Q$-divisor. If $K_X+\tau\Delta$ is not pseudoeffective for any $\tau<1$, then we conclude by Theorem \ref{thm:tau}(b). Otherwise, as in Step 1 of the proof of Theorem \ref{thm:RC} there exists a $\Q$-divisor $D\geq0$ such that $K_X+\Delta\sim_\Q D$ and $\Supp\Delta\subseteq\Supp D$. Thus, $0<\kappa(X,\Delta)\leq\kappa(X,D)$ as in Step 1 above, and we conclude by Theorem \ref{thm:Lai}.

\medskip

\emph{Step 3.}
Now assume only that $\lfloor\Delta\rfloor=0$. The pair $(X,\Delta)$ has a minimal model $(X',\Delta')$ by \cite[Theorem C]{LT22}. By \cite[Lemma 2.8]{LP18a} we have
\begin{equation}\label{eq:10}
\kappa(X',\Delta')\geq\kappa(X,\Delta)>0.
\end{equation}

By Theorem \ref{thm:ShoBir} there exist finitely many $\Q$-divisors $\Delta_i$ on $X'$ and positive real numbers $r_i$ such that each pair $(X',\Delta_i)$ is klt, each $K_{X'}+\Delta_i$ is nef, $\Delta'=\sum r_i\Delta_i$ and $K_{X'}+\Delta'=\sum r_i(K_{X'}+\Delta_i)$. By Theorem \ref{thm:DL}(a) there exist $\Q$-divisors $D_i\geq0$ such that $K_{X'}+\Delta_i\sim_\Q D_i$, so that
\begin{equation}\label{eq:9}
K_{X'}+\Delta'\sim_\R\sum r_iD_i.
\end{equation}

Pick positive rational numbers $s_i$ such that $\sum s_i=1$, and set $\Delta^\circ:=\sum s_i\Delta_i$ and $D^\circ:=\sum s_iD_i$. Then we have $K_{X'}+\Delta^\circ\sim_\Q D^\circ$  and $\Supp D^\circ=\Supp(\sum r_i D_i)$, and therefore \eqref{eq:9} gives
$$\kappa_\iota(X',K_{X'}+\Delta')=\kappa(X',K_{X'}+\Delta^\circ),\quad \nu(X',K_{X'}+\Delta')=\nu(X',K_{X'}+\Delta^\circ).$$
Since $\Supp\Delta'=\Supp\Delta^\circ$, by \eqref{eq:10} we also have $\kappa(X',\Delta^\circ)=\kappa(X',\Delta')>0$, hence $\kappa(X',K_{X'}+\Delta^\circ)=\nu(X',K_{X'}+\Delta^\circ)$ by Step 2. This implies \eqref{eq:11}.

\medskip

\emph{Step 4.}
Finally, assume that $\lfloor\Delta\rfloor\neq0$. If $K_X+\Delta-\tau\lfloor\Delta\rfloor$ is not pseudo\-effective for any $\tau>0$, then we conclude by Theorem \ref{thm:tau}(a). Other\-wi\-se, pick $0<\tau<1$ such that $K_X+\Delta-\tau\lfloor\Delta\rfloor$ is pseudoeffective. Then for each $0<\tau'\leq\tau$ we have $\Supp(\Delta-\tau'\lfloor\Delta\rfloor)=\Supp\Delta$, hence 
$$\kappa(X,\Delta-\tau'\lfloor\Delta\rfloor)=\kappa(X,\Delta)>0.$$
By Step 3, this implies that each pair $(X,\Delta-\tau'\lfloor\Delta\rfloor)$ has a good model. Then we finish as in Step 4 of the proof of Theorem \ref{thm:MRC}.
\end{proof}

	\bibliographystyle{amsalpha}
	\bibliography{biblio}

\end{document}